\newtheorem{satz}{Satz}
\newtheorem{theorem}[satz]{Theorem}
\newtheorem{lemma}[satz]{Lemma}
\newtheorem{defi}[satz]{Definition}
\newtheorem{rem}[satz]{Remark}
\def \R{\mathbb{R}}
\def \N{\mathbb{N}}
\def \r{\mathcal{R}}
\def \Hext{H_{\operatorname{ext}}}
\def \Heff{H_{\operatorname{eff}}}
\def \tp{\;\vdots\;}
\def \H{{\bf H}}
\renewcommand\llangle{\mathchoice{\big\langle\hspace{-.3em}\big\langle}
           {\langle\hspace{-.2em}\langle}{\langle\!\langle}{\langle\!\langle}}
\renewcommand\rrangle{\mathchoice{\big\rangle\hspace{-.3em}\big\rangle}
           {\rangle\hspace{-.2em}\rangle}{\rangle\!\rangle}{\rangle\!\rangle}}
\begin{document}

\title{Existence of weak solutions to an evolutionary model for magnetoelasticity}

\author{Barbora Bene\v{s}ov\'{a}\footnote{Chair of Mathematics in the Sciences, University of W{\"u}rzburg, Emil-Fischer-Str. 40, 97074 W{\"u}rzburg, Germany, \texttt{barbora.benesova@mathematik.uni-wuerzburg.de}, \texttt{johannes.forster@mathematik.uni-wuerzburg.de}, \texttt{anja.schloemerkemper@mathematik.uni-wuerzburg.de}},
Johannes Forster$^*$,
Chun Liu\footnote{Department of Mathematics, The Penn State University, University Park, PA 16802, USA, \texttt{liu@math.psu.edu}},
and Anja Schl\"omerkemper$^*$
}

\date{\today}

\maketitle

{\abstract{We prove existence of weak solutions to an evolutionary model derived for magnetoelastic materials. The model is phrased in Eulerian coordinates and consists in particular of (i) a Navier-Stokes equation that involves magnetic and elastic terms in the stress tensor obtained by a variational approach, of (ii) a regularized transport equation for the deformation gradient and of (iii) the Landau-Lifshitz-Gilbert equation for the dynamics of the magnetization.
The proof is built on a Galerkin method and a fixed-point argument. It is based on ideas from F.-H.~Lin and the third author for systems modeling the flow of liquid crystals as well as on methods by G.~Carbou and P.~Fabrie for solutions of the Landau-Lifshitz equation.
}}\\

\section{Introduction}

Magnetoelastic (or magnetostrictive) materials respond elastically to an applied magnetic field (magnetostriction) and/or react with a change of magnetization to a mechanical stress (magnetoelastic effect). Because of the remarkable response to external stimuli, they are smart materials that are attractive not only from the point of view of mathematical modeling but also for applications. Magnetoelastic materials are among others used in sensors to measure force or torque (cf., e.g., \cite{BienkowskiSzewczyk2002, BienkowskiSzewczyk2004, GrimesRoy_etal2011}) as well as magnetic actuators (cf., e.g., \cite{SnyderNguyenRamanujan2010}) or generators for ultrasonic sound (cf., e.g., \cite{BuchelnikovVasilev1992}).

Modeling of magnetoelastic materials goes back to \cite{brown} as well as  \cite{Tiersten1964, Tiersten1965}. Later, many works appeared studying magnetoelasticity particularly in the static case relying on a minimization of energy, see, e.g., \cite{DeSimoneDolzmann1998,DeSimoneJames2002,JamesKinderlehrer1993}. Let us point out that magnetoelastic models can be seen as generalizations of models for micromagnetics that are also studied for their own right, cf., e.g., the reviews \cite{KruzikProhl2006,DKMO}. In the dynamic case, the available works confine themselves to the small strain setting, cf., e.g., \cite{ChipotShafrir_etal2009,CarbouEfendievFabrie2011}. 

The prominent difficulty in analyzing magnetoelastic models lies in the fact that while elasticity is commonly formulated in the reference configuration, micromagnetics is modeled in the current or deformed configuration. To overcome this issue, models are either formulated in the small-strain setting as in, e.g., \cite{ChipotShafrir_etal2009,CarbouEfendievFabrie2011} or, because the elastic energy assures invertibility of the deformation, it is possible to transform the magnetic part into the reference configuration \cite{DeSimoneDolzmann1998,DeSimoneJames2002,KruzikStefanelliZeman2015}. 

In this article, we shall take a different approach and formulate the fully nonlinear problem of magnetoelasticity completely in Eulerian coordinates in the current configuration. In the current configuration, the main state variable is the velocity and not the deformation. This poses an obstacle from the point of view of elasticity since then the deformation gradient is not readily available. Thus, we follow the approach of \cite{LiuWalkington2001} where this issue has been resolved by finding a differential equation---a transport equation for the deformation gradient---that allows to obtain the deformation gradient (in the current configuration) from the velocity gradient. Therefore, we will not need to care about the invertibility of the deformation. Moreover, the model is perfectly fitted to be used in modeling of so-called magnetorheological fluids; cf. e.g. \cite{Wereley2014}. Those are so-called smart fluids containing magnetoelastic particles in a carrier fluid. Indeed, it seems feasible that the system of partial differential equations under consideration \eqref{sumeqmotshortsimple}--\eqref{summicroforcebalancesimple} can be extended to fluid models via a phase field approach (cf. also \cite{LiuWalkington2001}).

As for the magnetic part, we model the evolution of magnetization by the Landau-Lifshitz-Gilbert (LLG) equation \cite{LandauLifshitz1935,Gilbert1955,Gilbert2004} with, however, the time derivative replaced by the convective one. This is in order to take into account that changes of the magnetization also occur if transported by the underlying viscoelastic material. We refer to Section \ref{sec:Model} for a detailed description of the model, see also \cite{ForsterGarcia-Cervera_etal2016} and \cite{Forster2016}. In this work, we prove existence of weak solutions in the case where the stray field and the anisotropy are neglected for mathematical reasons, and where we regularized the evolution equation for the deformation gradient, cf.\ also \cite{Forster2016} for the case that the external magnetic field is zero in addition. Our proof is based on a Galerkin method discretizing the velocity in the balance of momentum equation and a fixed point argument. It borrows ideas from \cite{LinLiu1995}, beyond which our system is further coupled to the evolution of the deformation gradient and the LLG equation. For the treatment of the LLG equation, however, we further utilize methods from \cite{CarbouFabrie2001}, which are necessary to converge approximate solutions using higher regularity estimates, see also \cite{BenesovaForster_etal2016c} for a sketch of the proof and an announcement of this work.

As an aside, we remark that a corresponding system which is equipped with a gradient flow for the magnetization $M$ instead of the LLG equation is studied in \cite{Forster2016}. This system has the advantage of being closer to the system studied in \cite{LinLiu1995} in terms of the magnetization. The gradient flow type dynamics are less involved than the LLG equation, which makes the treatment of the equation for the magnetization $M$ a lot easier. However, in the context of micromagnetics, the LLG equation is the established description of the dynamics of the magnetization. For the gradient flow case, existence of weak solutions is proved by a Galerkin approximation and a fixed-point argument similar to the proofs of this paper, but less regularity for the magnetization is needed.

The paper is structured as follows: we start with a presentation of the considered model for magnetoelastic materials in Section~\ref{sec:Model}. There, we state the model equations and give a brief derivation. In Section~\ref{sec:MainResult}, we state the main result of this article, viz the existence of weak solutions to the evolutionary model for magnetoelasticity in Theorem~\ref{ThmSln}. The proof of this Theorem is presented in Section~\ref{sec-proof}. In Section~\ref{sec:PfsLem}, we prove two lemmas used in the proof of Theorem~\ref{ThmSln}.

\section{Presentation of the model}
\label{sec:Model}
Let $\Omega \subset \R^d$, $d=2,3$  represent the current configuration. Then we consider the following model for magnetoelastic solids:
\begin{align}
& \partial_t v + (v\cdot \nabla) v - \mathrm{div}\mathcal{T} = f \label{sumeqmotshortsimple}& & \text{(balance of momentum)} \\[0.5em]
& \nabla\cdot v = 0, \label{incompressibility}  & & \text{(incompressibility)} \\[0.5em]
& \partial_t F + (v\cdot\nabla)F - \nabla v F = \kappa \Delta F, \label{chainruleregular}& &\text{(evolution of deform. gradient)} \\[0.5em]
& \partial_t M + (v\cdot\nabla) M = - \gamma M\times H_\mathrm{eff} - \lambda M \times M \times H_\mathrm{eff} ,\label{summicroforcebalancesimple} & &\text{(LLG equation)}
\end{align}
closed by boundary conditions \eqref{simpleboundaryv}--\eqref{boundaryMsimple} and initial conditions \eqref{initialv}--\eqref{initialM} below. Here, \eqref{sumeqmotshortsimple} is the balance of momentum in Eulerian coordinates with  $v: \Omega \times (0,T) \to \R^d$ being the velocity mapping, $\mathcal{T}$ the stress tensor and $f$ the applied body forces. Similarly, \eqref{summicroforcebalancesimple} is a variant of the  Landau-Lifschitz-Gilbert (LLG) evolution equation for the magnetization $M: \Omega \times (0,T) \to \R^3$, in which we replaced the time-derivatives in the LLG equation by the convective one in order to take changes of the magnetization through transport into account. In this equation, $H_\mathrm{eff}$ is the effective magnetic field, cf.\ \eqref{Heff} below,  $\gamma>0$ is the electron gyromagnetic ratio and $\lambda>0$ is a phenomenological damping parameter. Here and in the following, we impose the standard constraint
\begin{equation}
|M| = 1 \mbox{ almost everywhere in } \Omega \times (0,T).
\end{equation}
Equation \eqref{chainruleregular} is an evolution for $F$ which, in our modeling, is an approximation for the deformation gradient in Eulerian coordinates. Indeed, if $\kappa = 0$, \eqref{chainruleregular} is obtained by taking a time derivative of the deformation gradient and rephrasing it in Eulerian coordinates, cf. \cite[Equation~(5)]{LiuWalkington2001}. In this case, \eqref{chainruleregular} is an evolution equation for the deformation gradient, but taking $\kappa = 0$ would make the proof of existence more involved and cannot be done without further assumptions on $F$. Therefore, we include a regularization term (cf., e.g., \cite[p.~1461]{LinLiuZhang2005}) with $\kappa$ presumably small. \  

The stress-tensor $\mathcal{T}$ as well as the effective field $H_\mathrm{eff}$ are constitutive quantities. In this work, we assume the decomposition
$$
\mathcal{T} = -p\mathbb{I} + \nu \nabla v + \mathcal{T}_\mathrm{rev},
$$
where $-p \mathbb{I}$ represents the pressure (which, however, shall not appear in our work since we will consider weak solutions only) and $\nu \nabla v$ is the viscous stress corresponding to a quadratic dissipation potential. Finally, $\mathcal{T}_\mathrm{rev}$ is the magnetoelastic part of the stress tensor that, as well as the effective magnetic field $\Heff$, will be deduced from the Helmholtz free energy. 

For the Helmholtz free energy in magnetoelasticity we have the following general form:
\begin{equation}
\psi(F,M) = \underbrace{A \int_{\Omega} |\nabla M|^2~dx}_{\text{exchange energy}} + \underbrace{\int_{\Omega} \phi(F,M)~dx}_{\text{anisotropy energy}} + 
\underbrace{\frac{\mu_0}2 \int_{\R^3} |H|^2~dx}
_{\text{stray field energy}} +  \underbrace{\int_{\Omega} W(F)~dx}_{\text{elastic energy}} \underbrace{ - \mu_0\int_\Omega M\cdot H_{\text{ext}} dx}_{\text{Zeeman energy}},
\label{energy}
\end{equation}
where the stray field $H:\R^3\to\R^3$ is obtained from (possibly a reduced set) of the magnetostatic Maxwell equations. Notice that the whole energy including its elastic part is formulated in the current configuration. From the Helmholtz free energy we obtain the effective field $H_\mathrm{eff}$ by taking the negative variational derivative of $\psi$ with respect to $M$. In order to obtain $\mathcal{T}_\mathrm{rev}$ we use that the elastic stress is a variational derivative of the Helmholtz free energy with respect to the deformation gradient $F$. However, care is needed during this procedure since the free energy has to be transferred back to the reference configuration and then the derivative with respect to the deformation gradient is taken in order to obtain the Piola-Kirchhoff stress tensor. This stress tensor is subsequently again transformed into the current configuration to obtain the Cauchy stress tensor. We present the derivation only for a simplified case considered in this article and refer to \cite{Forster2016,ForsterGarcia-Cervera_etal2016} for a detailed derivation of the simplified as well as the general model, which is based on taking variations of the action functional while carefully taking into account changes between the Eulerian and Lagrangian coordinates. 

Here, we study a simplified situation of isotropic magnetic particles (which allows us to set the anisotropy energy to zero). Further, we neglect the stray field energy (for mathematical reasons). Thus, we are left with 
\begin{equation}
\psi(F,M) = A \int_{\Omega} |\nabla M|^2~dx+  \int_{\Omega} W(F)~dx -\mu_0\int_\Omega M\cdot H_{\text{ext}} dx,
\label{energy-simpl}
\end{equation}
and so the effective magnetic field, which equals the negative variational derivative of $\psi$ with respect to $M$, is given by
\begin{equation} \label{Heff}
H_\mathrm{eff}= 2A \Delta M + \mu_0 H_{\text{ext}}.
\end{equation}
To obtain $\mathcal{T}_\mathrm{rev}$, we need to transform $\psi$ from \eqref{energy-simpl} to the reference configuration $\widetilde{\Omega}$. To this end, we define the deformation by the flow map $x:\widetilde{\Omega}\times[0,T]\to\Omega$, $(X,t)\mapsto x(X,t)$ and assume that $X\mapsto x(X,t)$ is a bijective mapping at every time $t\in[0,T]$. With the flow map, we define the velocity in the Eulerian coordinate system $v: \Omega\times[0,T] \to\R^d$ by
\begin{equation*}
v(x(X,t),t) = \frac{\partial}{\partial t}x(X,t).
\end{equation*}
We denote by $X\in\widetilde{\Omega}$ material points in the reference configuration (Lagrangian coordinates) and by $x\in\Omega$ spatial points in the current configuration (Eulerian coordinates). Moreover, we define $\widetilde{M}: \widetilde{\Omega}\times[0,T]\to\R^3$ to be the magnetization in the reference configuration satisfying $M(x(X,t),t) = \widetilde{M}(X,t)$, and $\widetilde{F}: \widetilde{\Omega}\times[0,T]\to\R^{d\times d}$ to be the deformation gradient in the reference configuration satisfying $F(x(X,t),t) = \widetilde{F}(X,t)$. Next, we obtain for the Helmholtz free energy transformed in Lagrangian coordinates, denoted by $\widetilde{\psi}(x,\widetilde{F}, \widetilde{M})$,
$$
\widetilde \psi(x,\widetilde{F}, \widetilde{M}) = \int_{\widetilde{\Omega}} A |\nabla_X \widetilde M (X) \widetilde F^{-1}(X,t)|^2  - \mu_0 \widetilde M(X,t){\cdot}H_{\text{ext}}(x(X,t),t) + W(\widetilde F(X,t)) ~\mathrm{d}X.
$$
Notice that, due to incompressibility \eqref{incompressibility}, the Jacobian of the transformation is one. Moreover, notice that through the external magnetic field, the Helmholtz free energy in the reference configuration also depends on the deformation itself. Thus, the term $\widetilde{\mathcal{F}}(x)= - \int_{\widetilde{\Omega}} \mu_0 \widetilde{M}(X)\cdot H_{\text{ext}}(x(X,t),t)~\mathrm{d}X$ can be understood as the potential of an applied volume force to the mechanical system (cf.\ forces with generalized potentials in, e.g., \cite{Ciarlet1988}), whence the volume force $\widetilde f$ is obtained as the negative variational derivative of $\widetilde{\mathcal{F}}$ with respect to $x$. Transforming back to the current configuration, we have that
$$
 f=\mu_0 \nabla H_\mathrm{ext}^\top M.
$$

Moreover, taking the variational derivative of $\widetilde \psi$ with respect to $\widetilde F$ and transforming back to the current configuration, we obtain for the elastic stress tensor 
$$
\mathcal{T}_\text{rev} = -2A\nabla M\odot\nabla M  + W'(F)F^\top \qquad \text{with} \quad (\nabla M\odot\nabla M)_{ij} = \sum_k \nabla_i M_k \nabla_j M_k. 
$$
Altogether, we are left with the following system of partial differential equations
\begin{align}
& \partial_t v + (v\cdot \nabla) v+ \nabla p + \nabla\cdot \bigl(2 A \nabla M\odot\nabla M - W'(F)F^\top)  - \nu \Delta v \label{sys1} = \mu_0 \nabla H_\mathrm{ext}^\top M  \\[0.5em]
& \nabla\cdot v = 0 \\[0.5em]
& \partial_t F + (v\cdot\nabla)F - \nabla v F = \kappa \Delta F, \label{sys2}\\[0.5em]
& \partial_t M + (v\cdot\nabla) M = - \gamma M\times \bigl(2A \Delta M + \mu_0 H_{\text{ext}}\bigr) - \lambda M\times M \times \bigl(2A \Delta M + \mu_0 H_{\text{ext}}\bigr)\label{sys3}
\end{align}
in $\Omega\times(0,T)$, accompanied with the following boundary/initial conditions:
\begin{eqnarray}
\label{simpleboundaryv}
v &=& 0 \qquad\text{on }\partial\Omega\times(0,T), \\
\label{simpleboundaryF}
F &=& 0 \qquad\text{on }\partial\Omega\times(0,T), \\
\label{boundaryMsimple}
\frac{\partial M}{\partial n} = (\nabla M) n &=& 0 \qquad\text{on }\partial\Omega\times(0,T)\\
v(x,0) &=& v_0(x), \quad \nabla\cdot v_0(x) = 0, \label{initialv} \\
F(x,0) &=& F_0(x) = \mathbb{I}, \label{initialF} \\
M(x,0) &=& M_0(x), \quad |M_0| \equiv 1, \label{initialM}
\end{eqnarray}
where $n$ denotes the outer normal to the boundary of $\Omega$.

\section{Main result}
\label{sec:MainResult}

As the main result of this contribution, we prove existence of weak solutions to the system \eqref{sys1}--\eqref{sys3}. We start by defining the notion of weak solutions we shall work with. Here and in the following we set $A=\frac12$, $\mu_0=1$ and $\gamma = \lambda = 1$ since constants are irrelevant for this mathematical analysis.

Moreover, we shall restrict our scope to $\Omega \subset \R^2$ in which we may obtain weak solution globally in time. If $\Omega  \subset \R^3$, the presented proof remains valid up to small modifications but only to obtain short-time existence of solutions; cf. Remark \ref{rem-3D} below. 

In the following, Bochner spaces are denoted by $L^p(\mathrm{O}; V)$, $W^{k,p}(\mathrm{O}; V)$ for functions mapping $\mathrm{O} \subset \R^m$ to a Banach space $V$ of which the norm in $V$ belongs to the appropriate Lebesgue or Sobolev space. In the special case in which $V$ is $\R^n$, we denote by $L^p_{\mathrm{div}}(\mathrm{O}; \R^n)$, $W^{1,p}_{0,\mathrm{div}}(\mathrm{O}; \R^n)$ those subsets of the appropriate Lebesgue or Sobolev space on which the distributional divergence vanishes. In the Sobolev space, also the boundary values (in the sense of trace) are $0$. We will use the notation $W^{-1,2}(\mathrm{O}; \R^n)$ for the dual space of $W^{1,2}_0(\mathrm{O}; \R^n)$; moreover, we shall denote the duality pairing between  $W^{-1,2}(\mathrm{O}; \R^n)$ and $W^{1,2}_0(\mathrm{O}; \R^n)$ by $\llangle \cdot, \cdot \rrangle$.
\begin{defi}
\label{def-weakSol}
Let $\Omega \subset \R^2$ be a $C^\infty$-domain and let $T > 0$ be the final time of the evolution. Then, we call $(v,F,M)$ enjoying the regularity
\begin{align*}
v &\in L^\infty(0,T; L^{2}_{\mathrm{div}}(\Omega,\R^2)) \cap L^2(0,T; W^{1,2}_{0,\mathrm{div}}(\Omega;\R^2)),  \\
F &\in L^\infty(0,T; L^2(\Omega;\R^{2 \times 2})) \cap L^2(0,T; W_0^{1,2}(\Omega;\R^{2 \times 2})), \\
M &\in L^\infty(0,T; W^{1,2}(\Omega;\R^3)) \cap L^2(0,T; W^{2,2}(\Omega;\R^3))
\end{align*}
a weak solution of the system \eqref{sys1}--\eqref{sys3} accompanied with initial/boundary conditions \eqref{simpleboundaryv}--\eqref{initialM} if it satisfies \eqref{boundaryMsimple} in the sense of trace as well as the initial conditions \eqref{initialv}--\eqref{initialM} in the sense
\begin{equation*}
 v(\cdot,t) \xrightarrow{L^2(\Omega)} v_0(\cdot), \qquad
 F(\cdot,t) \xrightarrow{L^2(\Omega)} F_0(\cdot), \qquad
 M(\cdot,t) \xrightarrow{W^{1,2}(\Omega)} M_0(\cdot) \qquad \text{as } t\to0^+,
\end{equation*}
and fulfills the system
\begin{align}
\nonumber & \int_0^{T}\!\!\!\int_\Omega - v\cdot \partial_t \phi+ (v\cdot\nabla)v \cdot \phi- \left(\nabla M\odot\nabla M - W'(F)F^\top - \nu \nabla v\right)\cdot \nabla \phi - (\nabla \Hext^\top M)\cdot\phi ~dx~dt \\
&\qquad = \int_\Omega v_0(x)\phi(x,0)dx\label{weakformv} \\[0.5em]
 & \int_0^{T}\!\!\!\int_\Omega - F \cdot \partial_t \xi+ (v\cdot\nabla) F \cdot \xi -  (\nabla v F) \cdot \xi + \kappa \nabla F \tp \nabla \xi~dx~dt = \int_\Omega F_0(x) \cdot \xi(x,0)~dx,  \label{weakformF} \\[0.5em]
& \nonumber \int_0^{T}\!\!\!\int_\Omega - M\cdot \partial_t \zeta +(v\cdot\nabla)M\cdot \zeta  + (M \times (\Delta M+H_\mathrm{ext})) \cdot \zeta -  |\nabla M |^2 M \cdot \zeta - \Delta M \cdot \zeta~dx~dt \\
&\qquad = \int_0^{T}\!\!\!\int_\Omega  \big( -M \cdot H_\mathrm{ext} M + H_\mathrm{ext}\big) \zeta dxdt + \int_\Omega M_0(x)\cdot \zeta(x,0)~dx, \label{weakformM}
\end{align}
for all $\phi(x,t) = \phi_1(t)\phi_2(x)$ with $\phi_1 \in W^{1,\infty}(0,T)$ satisfying $\phi_1(T)=0$ and $\phi_2 \in W^{1,2}_{0,\mathrm{div}}(\Omega;\R^2)$, for all $\xi(x,t) = \xi_1(t)\xi_2(x)$ with $\xi_1 \in W^{1,\infty}(0,T)$ satisfying $\xi_1(T)=0$ and $\xi_2 \in W^{1,2}(\Omega;\R^{2 \times 2})$ and all $\zeta(x,t) = \zeta_1(t)\zeta_2(x)$ with $\zeta_1 \in W^{1,\infty}(0,T)$ satisfying $\zeta_1(T)=0$ and $\zeta_2 \in L^2(\Omega;\R^3)$.
\end{defi}

In the weak formulation of \eqref{sys1} and \eqref{sys2} we used integration by parts to transfer the highest derivatives in the Laplacian to the test function, which is standard. Moreover, we used that, as long as $|M|=1$, \eqref{sys3} is equivalent to (see, e.g., \cite{BertschPodio-GuidugliValente2001,CarbouFabrie2001})
\begin{equation}
\partial_t M + (v\cdot\nabla) M = - M \times (\Delta M + H_\mathrm{ext})+ |\nabla M|^2M + \Delta M - M(M\cdot H_\mathrm{ext}) + H_\mathrm{ext}.
\label{LLG-EqFormWeakSol}
\end{equation}

Before formulating our main result, let us summarize the assumptions on the data in the model that we shall need: Let us start with the elastic energy $W$, which must satisfy $W(\r\Xi)=W(\Xi)$ for all $\r\in SO(d)$ (and thus $W'(\r\Xi) = \r W'(\Xi)$; see also \cite{LiuWalkington2001}). We assume that $W \in C^2(\R^{2\times 2})$ is of 2-growth, i.e., there exists a constant $C_1 > 0$ such that
\begin{equation}
\label{conditionW0}C_1|A|^2 \leq W(A) \leq C_1(|A|^2  + 1) \qquad \forall A \in \R^{2\times 2}.
\end{equation}
We assume that $W'(0)=0$. Further, notice that due to the differentiability of $W$ this implies that $W'(\cdot)$ is of 1-growth, that is
\begin{equation} \label{conditionW1}
|W'(A)| \leq C_2(|A|  + 1) \qquad \forall A \in \R^{2\times 2}
\end{equation}
and likewise $W''(\cdot)$ is bounded, i.e. 
\begin{equation} \label{conditionW2a}
|W''(A)| \leq C_3 \qquad \forall A \in \R^{2\times 2}.
 \end{equation}
Finally, we assume that $W$ is strictly convex; that is 
\begin{equation} 
\exists a > 0 \qquad (W''(\Xi)A)\cdot A \geq a |A|^2 \qquad \forall \Xi, A \in \R^{2 \times 2}.
\label{conditionW2}
\end{equation}

Our main result is the existence of weak solutions to \eqref{sys1}--\eqref{sys3} in the sense of Definition \ref{def-weakSol}:
\begin{theorem}\label{ThmSln}
Let $\Omega \subset \R^2$ be a $C^\infty$-domain and let $T > 0$ be the final time of the evolution. Let $W \in C^2(\R^{2\times 2};\R)$ satisfy  \eqref{conditionW0}--\eqref{conditionW2}. In addition, assume that
\begin{align}\label{HextRegularity}
\Hext &\in C^0(0,T;L^2(\Omega;\R^3)) \cap L^2(0,T;L^\infty(\Omega;\R^3)) \cap L^3(0,T;W^{1,4}(\Omega;\R^3))\\
\partial_t \Hext &\in L^1(0,T;L^1(\Omega;\R^3))
\end{align}
and $v_0\in L^2_{\mathrm{div}}( \Omega;\R^2)$, $F_0\in L^2(\Omega;\R^{2\times 2})$ and $M_0\in W^{2,2}(\Omega;\R^3)$.
Moreover, let the initial data and the external field satisfy the smallness condition
\begin{equation}\label{smallinitialdatacondThmB}
\mathrm{IED}:=\int_\Omega \frac{1}{2} |v_0|^2 + \frac{1}{2}|\nabla M_0|^2 + W(F_0) \, dx + 2\|H_\mathrm{ext}\|_{L^\infty(0,T; L^1(\Omega; \R^3))} + \|\partial_t H_\mathrm{ext}\|_{L^1(0,T; L^1(\Omega; \R^3))} < \frac{1}{\widetilde C}
\end{equation}
for a suitably small constant $\widetilde C>0$ depending just on $\Omega$. Then there exists a weak solution of the system \eqref{sys1}--\eqref{sys3} accompanied with initial/boundary conditions \eqref{simpleboundaryv}--\eqref{initialM} in the sense of Definition \ref{def-weakSol}.
\end{theorem}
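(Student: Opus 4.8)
The plan is to construct approximate solutions by a Galerkin discretization of the momentum equation \eqref{sys1} combined with an inner fixed-point argument that, given a velocity field, solves the transport-diffusion equation \eqref{sys2} for $F$ and the regularized LLG equation \eqref{sys3} for $M$, and then feeds the resulting stress tensor back into the momentum balance. Concretely, I would fix an $L^2$-orthonormal basis $\{w_k\}_{k\in\N}$ of $W^{1,2}_{0,\mathrm{div}}(\Omega;\R^2)$ (e.g. eigenfunctions of the Stokes operator) and look for $v_n(t) = \sum_{k=1}^n c_k^n(t) w_k$. For a given $v_n$ (a fixed continuous curve in the finite-dimensional space), equation \eqref{sys2} is a linear parabolic equation for $F$, solvable in $L^2(0,T;W^{1,2}_0)\cap L^\infty(0,T;L^2)$ with the energy estimate controlled by $\|\nabla v_n\|_{L^2}$; and equation \eqref{sys3} (in the equivalent form \eqref{LLG-EqFormWeakSol}) is a semilinear parabolic equation for $M$ which, following Carbou--Fabrie \cite{CarbouFabrie2001}, admits a solution in $L^\infty(0,T;W^{1,2})\cap L^2(0,T;W^{2,2})$ with $|M|=1$ preserved, provided one works at the level of $W^{2,2}$-estimates to close the nonlinear terms $|\nabla M|^2 M$ and $M\times\Delta M$. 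The map $v_n \mapsto (F,M) \mapsto \tilde v_n$, where $\tilde v_n$ solves the Galerkin ODE system for the momentum equation with the stress tensor $-\nabla M\odot\nabla M + W'(F)F^\top$ and force $\nabla\Hext^\top M$, is then shown to have a fixed point by Schauder's theorem on a suitable ball in $C([0,T_n];\R^n)$, at least on a short time interval; combined with the a priori bounds this yields $v_n$ on all of $[0,T]$ in the 2D case.

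\textbf{A priori estimates.} The heart of the matter is a single coupled energy estimate that mirrors the formal energetics of the PDE system. Testing \eqref{sys1} with $v$, \eqref{sys2} with $W'(F)$ (using frame indifference $W'(\r\Xi)=\r W'(\Xi)$ to handle the $\nabla v\,F$ term, exactly as in \cite{LiuWalkington2001}), and \eqref{sys3} with $-(\Delta M + H_\mathrm{ext})$, the transport terms and the stress-coupling terms cancel pairwise, leaving
\begin{align}
\frac{d}{dt}\Big(\tfrac12\|v\|_{L^2}^2 + \tfrac12\|\nabla M\|_{L^2}^2 + \int_\Omega W(F)\,dx\Big) + \nu\|\nabla v\|_{L^2}^2 + \kappa\int_\Omega |\nabla F|\tp\nabla(W'(F))\,dx + \|M\times(\Delta M + H_\mathrm{ext})\|_{L^2}^2 \nonumber
\end{align}
is controlled by terms involving $\Hext$ and $\partial_t\Hext$. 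The convexity \eqref{conditionW2} makes the $\kappa$-term nonnegative, and the $2$-growth \eqref{conditionW0} lets one pass between $\int W(F)$ and $\|F\|_{L^2}^2$. The Zeeman-type contributions are absorbed using the regularity \eqref{HextRegularity}; this is where the smallness condition \eqref{smallinitialdatacondThmB} enters, so that the right-hand side can be absorbed into the left and a Gronwall argument closes the estimate uniformly in $n$ (and in the fixed-point iteration). In 2D the critical Ladyzhenskaya inequality $\|u\|_{L^4}^2 \le C\|u\|_{L^2}\|\nabla u\|_{L^2}$ is what makes the nonlinear terms (notably $|\nabla M|^2$, which must be controlled in a Lebesgue space better than $L^1$, and $(v\cdot\nabla)v$) subcritical, so that the estimate is global; in 3D one only gets it on a short time interval, hence Remark \ref{rem-3D}. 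One also needs higher estimates on $M$ (the $W^{2,2}$-bound) obtained as in \cite{CarbouFabrie2001} by testing the LLG equation with $\Delta^2 M$ or $\partial_t\Delta M$ and invoking the already-established energy bound; these are essential both for solving the fixed-point problem and for passing to the limit.

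\textbf{Passage to the limit.} With the uniform bounds in hand, extract weakly(-*) convergent subsequences: $v_n \rightharpoonup v$ in $L^2(0,T;W^{1,2}_{0,\mathrm{div}})$ and weakly-* in $L^\infty(0,T;L^2)$, similarly for $F_n$, and $M_n \rightharpoonup M$ in $L^2(0,T;W^{2,2})$ with $M_n \rightharpoonup^* M$ in $L^\infty(0,T;W^{1,2})$. Using the equations to bound the time derivatives $\partial_t v_n$ in (a dual space) and $\partial_t F_n$, $\partial_t M_n$ similarly, the Aubin--Lions--Simon lemma gives strong convergence of $v_n$ in $L^2(0,T;L^2)$, of $F_n$ in $L^2(0,T;L^2)$, and — crucially — of $\nabla M_n$ in $L^2(0,T;L^2)$ (from the $W^{2,2}$-bound plus $\partial_t M_n$ control, one even gets $M_n \to M$ in $L^2(0,T;W^{1,2})$ and hence, by interpolation, $\nabla M_n \to \nabla M$ in $L^2(0,T;L^q)$ for some $q>2$). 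This strong convergence is exactly what is needed to pass to the limit in the quadratic nonlinearities: $(v_n\cdot\nabla)v_n$, $\nabla M_n\odot\nabla M_n$, $W'(F_n)F_n^\top$ (here continuity of $W'$ and its $1$-growth \eqref{conditionW1} give the $L^1$-in-space convergence a.e. in time plus equiintegrability), $|\nabla M_n|^2 M_n$, $M_n\times\Delta M_n$ (product of strong $L^2$ and weak $L^2$), and $\nabla v_n\, F_n$. Finally, the constraint $|M|=1$ passes to the limit by strong convergence, the initial conditions are recovered from the weak continuity in time, and one checks the attainment of $v_0, F_0$ in $L^2$ and $M_0$ in $W^{1,2}$ — the latter using the $W^{2,2}$-regularity of $M_0$ and the energy equality.

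\textbf{Main obstacle.} I expect the genuinely delicate point to be the treatment of the LLG equation: unlike the liquid-crystal system of \cite{LinLiu1995}, the LLG dynamics is not a gradient flow, the term $M\times\Delta M$ is borderline, and one cannot close the fixed-point argument or the limit passage at the level of the basic energy alone — one is forced up to $W^{2,2}$-regularity for $M$ (following \cite{CarbouFabrie2001}), and propagating this higher regularity through the coupling with a merely $W^{1,2}$-regular (Galerkin) velocity, while keeping all constants independent of $n$ and compatible with the smallness threshold \eqref{smallinitialdatacondThmB}, is the technical crux. A secondary difficulty is the bookkeeping of the frame-indifference cancellation between the $-W'(F)F^\top$ stress term in \eqref{sys1} and the $\nabla v\,F$ production term in \eqref{sys2}, which must be done carefully at the Galerkin level where $v_n$ is only a finite combination of basis functions.
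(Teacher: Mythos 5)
Your proposal follows essentially the same route as the paper: Galerkin discretization of the velocity coupled to a Schauder fixed-point argument for $(F,M)$, the coupled energy estimate obtained by testing the three equations with $v$, $W'(F)$ and $-(\Delta M + H_\mathrm{ext})$, higher $W^{2,2}$-in-space estimates on $M$ in the style of Carbou--Fabrie, dual bounds on the time derivatives, Aubin--Lions compactness for the limit passage, and the energy inequality together with the convexity of $W$ to recover the initial datum $F_0$ strongly. One substantive imprecision worth flagging: the smallness condition \eqref{smallinitialdatacondThmB} is not used to absorb the Zeeman-type contributions (those need only the regularity \eqref{HextRegularity}), nor is that part of the argument closed by Gronwall; rather, after expanding $|M\cdot H_\mathrm{eff}|^2 - |H_\mathrm{eff}|^2$ and using $|M\cdot\Delta M|=|\nabla M|^2$ (from $|M|=1$), the term $\int_0^t\!\int_\Omega|\nabla M|^4\,dx\,ds$ must be absorbed into the $|\Delta M|^2$-dissipation via the Ladyzhenskaya interpolation $\|\nabla M\|_{L^4}^4\lesssim\|\nabla M\|_{L^2}^2\|\nabla^2 M\|_{L^2}^2+\|\nabla M\|_{L^2}^4$, which works precisely because $\|\nabla M\|_{L^\infty_t L^2_x}^2\le\mathrm{IED}$ and $\widetilde C\,\mathrm{IED}<1$. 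Getting this one mechanism right is the whole point of the smallness hypothesis, so in a full write-up the absorption step needs to be made explicit.
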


We prove Theorem \ref{ThmSln} in Section \ref{sec-proof} below. The proof is based on a Galerkin approximation of the system \eqref{sys1}--\eqref{sys3}. As is standard in the context of the Navier-Stokes equation, we approximate the velocity in terms of basis functions of the Stokes operator. We leave  \eqref{sys2} as well as the LLG equation \eqref{sys3} undiscretized but insert the discretized velocity into these equations. A similar approach has already been used in \cite{LinLiu1995}, \cite{SunLiu2009} but here the partial discretization of the system is crucial also in order to keep the constraint $|M|=1$ satisfied in the Galerkin scheme.

In the Galerkin scheme, we are able prove enough regularity of $F$ and $M$ to be able to deduce the energy estimates, which in turn are used for converging the Galerkin scheme. However, the energetic a-priori estimates do not yield enough regularity of $M$ because we get $ \nabla M$ bounded only in $L^\infty(0,T; L^2(\Omega; \R^3)$. Thus, we need to adapt parts of the regularity analysis for the LLG equation (cf. e.g. \cite{CarbouFabrie2001,Melcher2007,Melcher2010}) to the case of our system. Our argument here is based on the technique from \cite{CarbouFabrie2001}.

A further peculiarity is brought into the proof by the fact that an adaptation of the technique of \cite{CarbouFabrie2001} to our case is fully possible only on the level of the Galerkin approximation since then $v$ is smooth. Nevertheless, we can obtain a bound on $\Delta M$ in $L^2(0,T; L^2(\Omega; \R^3))$ that is uniform in the Galerkin index. This is all that we need to make the limiting process work. Yet, all the higher regularities of $M$ that were obtained in \cite{CarbouFabrie2001} will blow-up if the limiting velocity is not Lipschitz continuous.

Before embarking onto the proof of Theorem~\ref{ThmSln}, let us consider some remarks about the assumptions of this Theorem as well as possible extensions.

\begin{rem}[Weak formulation of the LLG equation]
Let us note that our weak formulation of the LLG equation \eqref{weakformM} is actually stronger than the standardly used weak formulation as proposed in \cite{AlougesSoyeur1992}. Notice that we keep the highest derivatives (i.e.\ the Laplacian) in \eqref{weakformM} and, in fact, since no partial integration in space has been used, we can deduce from \eqref{weakformM} that the LLG equation actually holds a.e.\ in $\Omega$. We can afford to require this stronger formulation since we anyway need to prove a bound on  $\Delta M$ in $L^2(0,T; L^2(\Omega; \R^3))$ in order to be able to pass to the limit in the Galerkin approximation in the stress tensor.
\end{rem}

\begin{rem}[Convexity of $W$]
The convexity assumption \eqref{conditionW2} makes sure that the energy is lower semicontinuous which we will need in order to pass to the limit in the energy inequality. Nevertheless, this assumption is not optimal from the physical point of view since elastic energies in the large strain setting are not convex. In order to relax this assumption, it would be necessary to change the used mathematical methods; in particular the assumption \eqref{conditionW2} enters in Step 2 of the proof of Theorem \ref{ThmSln}.
\end{rem}

\begin{rem}[$\Omega \subset \R^2$]
\label{rem-3D}
The fact that $\Omega \subset \R^2$ enters at several places in the proof of the Theorem \ref{ThmSln} but most crucially in Step 2 where higher order a-priori estimates for the magnetization are derived and the Ladyzhenskaya inequality is used. Nevertheless, although we do not consider it here, the proof could be easily adapted, by using techniques from \cite{CarbouFabrie2001}, to hold also for $\Omega \subset \R^3$ but with a sufficiently short final time of the evolution $T$.
\end{rem}

\begin{rem}[Smallness of the initial data]
The smallness condition \eqref{smallinitialdatacondThmB} on the initial data is quite limiting but a condition of this type seems to be necessary in order to prove existence of weak solutions to \eqref{sys1}--\eqref{sys3}. In fact, in order to pass to the limit in the stress tensor in the balance of momentum, we need sufficient integrability of $\nabla M$ for which we employ the higher regularity of $M$. However, if the initial data are not small, higher regularity cannot be expected. Indeed, blowup in finite time for the LLG equation from smooth but not small initial data has been numerically reported in \cite{BartelsKoProhl2008}. An analytical proof of this phenomenon seems to be missing for the LLG equation but has been given in the related harmonic map heatflow equation in \cite{ChangDinqYe}.  
\end{rem}

\section{Proof of Theorem \ref{ThmSln}}
\label{sec-proof}

Let us now give a detailed proof of Theorem \ref{ThmSln}. Everywhere in the proof, we use $C$ as a generic constant that may change from expression to expression. It may only depend on the problem parameters that are fixed throughout the proof such as $\Omega$, but dependence on other data, in particular on the initial conditions or the Galerkin index is specified explicitly. Moreover, note that we do not always display the dependence of $v$ on $x$ and $t$; instead of $v(x,t)$ we may also write $v(t)$ ,if we want to stress the dependence on time, or just $v$; correspondingly for $F$ and $M$.

\begin{proof}[Proof of Theorem \ref{ThmSln}]
We start by constructing suitable approximate solutions: 

{\bf Step 1: Discrete formulation and existence of discrete solutions}\\
Let us construct Galerkin approximations of the velocity via eigenfunctions of the Stokes operator; i.e., let $\{\xi_i\}_{i=1}^\infty \subset C^\infty(\bar\Omega;\R^2)$ be an orthogonal basis of $W^{1,2}_{0,\mathrm{div}}(\Omega;\R^2)$ and an orthonormal basis of $L^2_{\mathrm{div}}(\Omega;\R^2)$ satisfying
\begin{equation}\label{StokesEVs}
\Delta\xi_i + \nabla p_i = -\lambda_i\xi_i
\end{equation}
in $\Omega$ and vanishing on the boundary. Here, $0<\lambda_1\leq\lambda_2\leq\cdots\leq\lambda_m\leq\cdots$ with $\lambda_m\xrightarrow{m\to\infty}\infty$. Notice that $\Omega$ is a $C^\infty$-domain so the assumed regularity of the eigenfunctions can indeed be guaranteed. Further, let us denote  
$$P_m:W^{1,2}_{0,\mathrm{div}}(\Omega;\R^2) \to\H_m := \mathrm{span} \{ \xi_1,\xi_1,\ldots,\xi_m \}.$$

We start by defining the notion of a weak solution to the approximate problem.
\begin{defi} 
\label{def-discrSol}
We call $(v_m, F_m, M_m)$ a weak discrete solution of the system  \eqref{sys1}--\eqref{sys3} on some time interval $(0,t)\subset (0,T)$ provided that the pair $(F_m, M_m)$ enjoys the following regularity\begin{align}
F_m & \in W^{1,2}(0,t; W^{-1,2}(\Omega; \R^{2 \times 2})) \cap L^\infty(0,t;L^2(\Omega;\R^{2\times 2})) \cap  L^2(0,t;W^{1,2}_0(\Omega;\R^{2 \times 2} )) \label{reqRegDiscrF}\\
M_m & \in W^{1,\infty}(0,t; L^2(\Omega; \R^3)) \cap L^\infty(0,t;W^{2,2}(\Omega; \R^3)) \cap L^{2}(0,t; W^{3,2}(\Omega; \R^3)) \label{reqRegDiscrM}
\end{align}
and solves 
\begin{align}
& \llangle {\partial_t} F_m , \Xi \rrangle + \int_\Omega (v_m\cdot\nabla)F_m \cdot \Xi - (\nabla v_m F_m) \cdot \Xi + \kappa \nabla F_m \cdot \nabla\Xi~dx = 0 & &\mbox{in } (0,t), \label{LLGPARTweakformFapproxm} \\
 & \nonumber \partial_t M_m + (v_m\cdot\nabla) M_m = |\nabla M_m|^2M_m + \Delta M_m \\& \qquad \qquad \qquad \qquad \quad \; - M_m \times (\Delta M_m + H_\mathrm{ext}) - M_m(M_m\cdot H_\mathrm{ext}) + H_\mathrm{ext}  & &\mbox{in } \Omega \times (0,t)
 \label{LLGPARTweakformMapproxm}
\end{align}
for all  $\Xi\in W^{1,2}_0(\Omega;\R^{2\times 2})$, together with the initial conditions \eqref{initialF}--\eqref{initialM} and boundary conditions \eqref{simpleboundaryF}--\eqref{boundaryMsimple}.

Moreover, $v_m (x,s) = \sum_{i=1}^m g^i_m(s)\xi_i(x)$ with $g^i_m: (0,t) \to \R$ being the Lipschitz continuous solution of 
\begin{align}\label{LLGPARTbasrepvapproxmODE}
\frac{d}{dt} g_m^i(s) &= -\nu\lambda_i g_m^i(s) + \sum_{j,k=1}^m g_m^j(s) g_m^k(s) A_{jk}^i + D_m^i(s, F_m,M_m),  \qquad \qquad \qquad i=1,\ldots,m,
\end{align}
with the initial condition $g_m^i(0) = \int_\Omega v_0 \cdot \xi_i~dx$ and
\begin{equation}
\begin{split}
A_{jk}^i &:= -\int_\Omega (\xi_j\cdot\nabla)\xi_k \cdot \xi_i~dx, \\
 D_m^i(s,F,M) &:= \int_\Omega \big( \nabla M(s)\odot\nabla M(s) -  W'(F(s))F(s)^\top \big) \cdot \nabla\xi_i~ + (\nabla H_\mathrm{ext}^\top(s) M(s)) \cdot \xi_i dx. \label{LLGPARTbasrepvapproxmOp}
\end{split}\end{equation}
 for any $s\in(0,t) $, $i,j,k=1,\ldots,m$ and any $(F,M)$ in the function spaces mentioned in \eqref{reqRegDiscrF} and \eqref{reqRegDiscrM}.
\end{defi}

For further convenience, let us denote
$$
\mathrm{IN}:=\big(\|W(F_0)\|_{L^1(\Omega)}, \|M_0\|_{W^{2,2}(\Omega;\R^3)}\big).
$$

We prove existence of discrete solutions to \eqref{sys1}--\eqref{sys3} in the sense of Definition \ref{def-discrSol} by a fixed point argument. To this end, we define for all $0<t_0 \leq T$ and $L=\|v_0\|_{L^2(\Omega;\R^2)} + 1$ the set
\begin{align*}
V_m(t_0) = \left\{ v(x,t) = \sum_{i=1}^m g_m^i(t) \xi_i(x) \mbox{ in } \Omega \times [0,t_0): \sup_{t\in [0,t_0)} \left(\sum_{i=1}^m \left| g_m^i(t) \right|^2 \right)^{\frac12} \leq L, \; g_m^i(0) = \int_\Omega v_0(x) \cdot \xi_i(x)~dx \right\}.
\end{align*}
Notice that $V_m(t_0)$ is a closed and convex subset of $C([0,t_0];\H_m)\subset C([0,t_0];L^2(\Omega;\R^2))$. With some $v \in V_m(t_0)$ fixed we may find weak solutions to \eqref{sys2}--\eqref{sys3} by means of the following lemma.

\begin{lemma}\label{LLGPARTLemParabolicEq} For $v\in V_m(t_0)$ fixed and $H_\mathrm{ext}$ satisfying \eqref{HextRegularity} there is a $0<t_1\leq t_0$, that only depends on $L$, $m$, $\mathrm{IN}$ and the external field $\Hext$, such that we can find unique $(F,M)$ with
\begin{align}
& F \in W^{1,2}(0,t_1; W^{-1,2}(\Omega; \R^{2 \times 2})) \cap L^2(0,t_1;W^{1,2}_0(\Omega;\R^{2 \times 2} )), \\
& M \in W^{1,\infty}(0,t_1; L^2(\Omega; \R^3)) \cap L^2(0,t_1; W^{3,2}(\Omega; \R^3))
\end{align}
 satisfying
\begin{align}
& \llangle {\partial_t} F , \Xi \rrangle + \int_\Omega (v \cdot\nabla)F \cdot \Xi - (\nabla v F) \cdot \Xi + \kappa \nabla F \cdot \nabla\Xi~dx = 0 \quad \mbox{ in } (0,t_1),  \label{Feq-Iterations}\\
 & \partial_t M + (v\cdot\nabla) M = |\nabla M|^2M + \Delta M 
 - M \times (\Delta M + H_\mathrm{ext}) - M(M\cdot H_\mathrm{ext}) + H_\mathrm{ext}  
 \quad \mbox{ in } \Omega \times (0,t_1), 
\label{Meq-Iterations}
\end{align}
for all  $\Xi\in W^{1,2}_0(\Omega;\R^{2\times 2})$, together with the initial conditions \eqref{initialF}--\eqref{initialM} and boundary conditions \eqref{simpleboundaryF}--\eqref{boundaryMsimple}.
Moreover, the pair $(F,M)$ satisfies the following bounds
\begin{align} \label{estLemma7}
& \|F\|_{L^\infty(0,t_1;L^2(\Omega;\R^{2\times 2}))} \leq C(L,m, \mathrm{IN})
 \qquad \|M\|_{L^\infty(0,t_1;W^{2,2}(\Omega;\R^3)) } \leq C(L,m,\mathrm{IN}, \Hext)
\end{align}
In addition, we have that $|M|=1$ a.e. in $\Omega\times(0,t)$ and the following estimate 
\begin{align}
\nonumber &\|\Delta M(t)\|_{L^2(\Omega; \R^3)}^2 \\ & \leq \|\Delta M_0\|_{L^2(\Omega; \R^3)}^2  + C(L,m,\Hext)\int_0^{t} \left(1+\|\nabla M\|_{L^2(\Omega; \R^{3 \times 2})}^6+ \|\nabla M\|_{L^2(\Omega; \R^{3 \times 2})}^2 \|\Delta M\|_{L^2(\Omega; \R^3)}^2\right) ds 
\label{LinftyH2onM}
\end{align}
for any $t$ for which the equation in \eqref{Meq-Iterations} is satisfied.
\end{lemma}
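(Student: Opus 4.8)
The plan is to use that, for fixed $v$, the two equations in Lemma~\ref{LLGPARTLemParabolicEq} decouple: \eqref{Feq-Iterations} does not involve $M$ and \eqref{Meq-Iterations} does not involve $F$. I would first dispose of $F$ and then put all the work into $M$, whose cubic and quasilinear nonlinearities are responsible for the finiteness of $t_1$. For $F$: since $v\in V_m(t_0)\subset\H_m$ is a finite linear combination of the smooth functions $\xi_i$, it is smooth in space, continuous in time, divergence free, and bounded with $\|v\|_{L^\infty(0,t_0;W^{1,\infty}(\Omega))}\le C(L,m)$; hence \eqref{Feq-Iterations} is a \emph{linear} parabolic system for $F$ with smooth bounded coefficients, and I would solve it by the standard Galerkin/Lax--Milgram scheme, obtaining $F\in W^{1,2}(0,t_0;W^{-1,2})\cap L^2(0,t_0;W^{1,2}_0)$ and uniqueness. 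The $L^\infty_tL^2$ bound in \eqref{estLemma7} comes from testing with $F$, using $\int_\Omega(v\cdot\nabla)F\cdot F\,dx=\tfrac12\int_\Omega v\cdot\nabla|F|^2\,dx=0$ by \eqref{incompressibility}, $|\int_\Omega(\nabla v\,F)\cdot F\,dx|\le\|\nabla v\|_{L^\infty}\|F\|_{L^2}^2$, Gr\"onwall, and $\|F_0\|_{L^2}^2\le C\|W(F_0)\|_{L^1(\Omega)}$ from \eqref{conditionW0}. This step imposes no upper bound on $t_1$.

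\textbf{Construction of $M$.} I would build $M$ by a Galerkin approximation in space, $M^N=\sum_{k=1}^N c_k^N(t)w_k$, in the eigenbasis $\{w_k\}\subset C^\infty(\overline\Omega;\R^3)$ of the Neumann Laplacian (smooth since $\Omega$ is $C^\infty$), projecting \eqref{Meq-Iterations}; the ODE system for $(c_k^N)$ has a polynomial right-hand side with coefficients built from $v$ and $\Hext$, so it is locally solvable. The heart of the argument is a chain of $N$-uniform a priori estimates. Testing with $M^N$ controls $\|M^N\|_{L^\infty_tL^2}$. Testing with $-\Delta M^N$, using $(M^N\times\Delta M^N)\cdot\Delta M^N=0$ pointwise and the two-dimensional Ladyzhenskaya inequality for the cubic term, $|\langle|\nabla M^N|^2M^N,\Delta M^N\rangle|\le\|\nabla M^N\|_{L^4}^2\|\Delta M^N\|_{L^2}\le C\|\nabla M^N\|_{L^2}\|M^N\|_{W^{2,2}}\|\Delta M^N\|_{L^2}$, gives a differential inequality for $\|\nabla M^N\|_{L^2}^2$ plus an $L^2_tW^{2,2}$ bound. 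Then testing with $\Delta^2M^N$ — which lies in the Galerkin space, with vanishing boundary contributions because $\partial_n w_k=0$ — and exploiting the cancellation $\langle M^N\times\nabla\Delta M^N,\nabla\Delta M^N\rangle=0$, so that $\langle M^N\times\Delta M^N,\Delta^2M^N\rangle=-\langle\nabla M^N\times\Delta M^N,\nabla\Delta M^N\rangle$, together with Gagliardo--Nirenberg/Ladyzhenskaya interpolation in $\R^2$ to absorb $\|\nabla\Delta M^N\|_{L^2}^2$ into the left-hand side, produces exactly an estimate of the form \eqref{LinftyH2onM} for $M^N$ (and, keeping a fraction of $\|\nabla\Delta M^N\|_{L^2}^2$, an $L^2_tW^{3,2}$ bound). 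A continuation argument then gives $t_1=t_1(L,m,\mathrm{IN},\Hext)>0$, independent of $N$, on which $\|M^N\|_{L^\infty(0,t_1;W^{2,2})}+\|M^N\|_{L^2(0,t_1;W^{3,2})}\le C(L,m,\mathrm{IN},\Hext)$, and reading $\partial_tM^N$ off the equation (using $\nabla M^N\in L^\infty_tL^p$ for every $p<\infty$ in $\R^2$) also $\partial_tM^N\in L^\infty(0,t_1;L^2)$. Finally I would pass to the limit $N\to\infty$ by weak-$*$ compactness in these spaces and Aubin--Lions, which yields $M^N\to M$ strongly in $L^2(0,t_1;W^{2,2})$ and $\nabla M^N\to\nabla M$ strongly in $L^2(0,t_1;L^4)$ — enough to identify the limits of $M^N\times\Delta M^N$ and $|\nabla M^N|^2M^N$ — so that $M$ has the asserted regularity and solves \eqref{Meq-Iterations} a.e.

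\textbf{Constraint, bounds and uniqueness.} With the equation available I would derive $|M|=1$: setting $\rho:=|M|^2-1$ and using $M\cdot(M\times\,\cdot\,)=0$, $M\cdot\Delta M=\tfrac12\Delta|M|^2-|\nabla M|^2$ and $M\cdot(|\nabla M|^2M)=|\nabla M|^2|M|^2$, one finds that $\rho$ solves the linear equation $\partial_t\rho+(v\cdot\nabla)\rho-\Delta\rho=2(|\nabla M|^2-M\cdot\Hext)\rho$ with $\rho(\cdot,0)=0$ by \eqref{initialM}; since $|\nabla M|^2\in L^1(0,t_1;L^\infty)$ (because $W^{3,2}\hookrightarrow C^1$ in $\R^2$) and $\Hext\in L^2(0,t_1;L^\infty)$ by \eqref{HextRegularity}, a Gr\"onwall estimate on $\|\rho\|_{L^2}^2$ forces $\rho\equiv0$. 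The bounds \eqref{estLemma7} and the estimate \eqref{LinftyH2onM} then pass to the $M$-limit by weak lower semicontinuity. Uniqueness of $F$ is immediate from linearity; uniqueness of $M$ follows by testing the equation for the difference of two solutions with that difference and with its negative Laplacian and a Gr\"onwall argument, the higher-order terms being controlled by the $W^{2,2}$-regularity already secured.

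\textbf{Main obstacle.} I expect the delicate point to be the $W^{2,2}$/$W^{3,2}$ a priori estimate for $M$: controlling the quasilinear term $M\times\Delta M$ through the cross-product cancellations and the cubic term $|\nabla M|^2M$ through two-dimensional interpolation so as to end up with \emph{precisely} the structure of \eqref{LinftyH2onM} (no free-standing $\|\Delta M\|_{L^2}^2$ or higher power on the right) and so as to read off the existence time $t_1$. This is exactly where the restriction $\Omega\subset\R^2$ enters (cf.\ Remark~\ref{rem-3D}) and where I would follow the technique of \cite{CarbouFabrie2001}.
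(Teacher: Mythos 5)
Your proposal is correct and follows essentially the same route as the paper: decouple the system, treat $F$ as a linear parabolic problem solved by Galerkin, and for $M$ run a Galerkin scheme in an eigenbasis compatible with the Neumann condition, testing with $M^N$ and $\Delta^2 M^N$, exploiting the cross-product cancellations and the two-dimensional interpolation inequalities of Carbou--Fabrie to close the $W^{2,2}$/$W^{3,2}$ estimate, obtaining a local time $t_1$ by a comparison/continuation argument, passing to the limit via Aubin--Lions, and proving $|M|=1$ by showing that $|M|^2-1$ solves a homogeneous linear scalar problem. The only (harmless) deviations are cosmetic: you use the Neumann Laplacian eigenbasis and insert an intermediate test with $-\Delta M^N$, whereas the paper uses eigenfunctions of $\Delta^2+\mathrm{id}$ (which makes $\Delta^2 M_k$, but not $\Delta M_k$, admissible in the Galerkin space) and goes directly from the $L^2$ test to the $\Delta^2$ test, recovering the $W^{1,2}$ bound afterwards from the Poincar\'e-type inequality \eqref{LLGPARTcarbouest21}; and for uniqueness of $M$ the paper tests the difference only with itself, which already suffices.
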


The proof of Lemma \ref{LLGPARTLemParabolicEq} is based on a Galerkin approximation within which the estimates \eqref{estLemma7} and \eqref{LinftyH2onM} can be obtained by following the reasoning of \cite{CarbouFabrie2001}. We postpone it, for the sake of clarity, to Section~\ref{sec:PfsLem} and rather continue with the proof of Theorem \ref{ThmSln} at this point. 

By Lemma~\ref{LLGPARTLemParabolicEq}, we have now found, for some fixed $v\in V_m(t_0)$, functions $(F,M)$ that solve \eqref{Feq-Iterations}--\eqref{Meq-Iterations} and are such that
$$
D^i_m(t; F,M) \in L^\infty(0,t_1),
$$
with the $L^\infty$-norm of $D^i_m(t; F,M)$ depending only on $L$ and $m$, the initial data through $\mathrm{IN}$ and the external magnetic field.

Thus, we can apply Carath\'eodory's existence theorem to obtain existence of unique Lipschitz continuous solutions $\tilde{g}_m^i(t)$ of
\begin{equation}
\frac{d}{dt} \tilde{g}_m^i(t) = -\nu\lambda_i \tilde{g}_m^i(t) + \sum_{j,k=1}^m \tilde{g}_m^j(t) \tilde{g}_m^k(t) A_{jk}^i + D^i(t; F,M),   \qquad \qquad i=1,\ldots,m,
\label{ODE-Iterations}
\end{equation}
with the initial condition $\tilde g_m^i(0) = \int_\Omega v_0 \cdot \xi_i~dx = g_m^i(0)$, at least on a time interval $(0,t_2)$ with $t_2 \leq t_1$.
Notice that, for $t\in[0,t_1]$ and for $\|\tilde{g}_m-g_m(0)\|\leq b$, $b>0$, where $\tilde{g}_m=(\tilde{g}_m^1,\ldots,\tilde{g}_m^m)$, we can bound the right-hand side of \eqref{ODE-Iterations} by the constant
\begin{equation*}
R=-\nu\lambda_i (2b+\|g_m(0)\|) + (2b+\|g_m(0)\|)^2 \sum_{j,k=1}^m \left|A_{jk}^i\right| + \|D^i(t; F,M)\|_{ L^\infty(0,t_1)}.
\end{equation*}
Thus, it follows from \cite[Chapter~1, Theorem~1]{Filippov1988} that $t_2$ has to be chosen in such a way that $R t_2 \leq b$; in other words $t_2$ depends just on the $L^\infty$-norm of $D^i_m(t; F,M)$ (that in turn only depends on $L$ and $m$, the initial data through $\mathrm{IN}$ and the external magnetic field).

Choosing $0<t^\ast \leq t_2$ small enough (but, as we shall see, only dependent on $L$ and $m$, the initial data through $\mathrm{IN}$ and the external magnetic field), we can assure that  
\begin{align}\label{tilde}
\tilde v(x,t) = \sum_{i=1}^m \tilde g_m^i(t) \xi_i(x)
\end{align}
 is in $V_m(t^\ast)$. To prove this, note that we can deduce from \eqref{ODE-Iterations} and \eqref{LLGPARTbasrepvapproxmOp} that $\tilde v$ satisfies
\begin{equation}
\int_\Omega \partial_t \tilde v \cdot \zeta + (\tilde v\cdot\nabla)\tilde v\cdot \zeta- \left(\nabla M \odot\nabla M - W'(F)F^\top - \nu \nabla \tilde v \right)\cdot \nabla \zeta -  (\nabla H_\mathrm{ext}^\top M)\cdot \zeta ~dx = 0
\label{v-EqDisc}
\end{equation}
for all $\zeta \in \H_m$. Testing this with $\tilde v$ itself yields 
\begin{align}
\nonumber \frac12 \frac{d}{dt}&\|\tilde v(t)\|_{L^2(\Omega;\R^2)}^2\\
&= - \underbrace{\int_\Omega (\tilde v\cdot\nabla)\tilde v \cdot \tilde v ~dx }_{=0} + \nu\|\nabla \tilde v\|_{L^2(\Omega)}^2 +  \int_\Omega (\nabla M\odot\nabla M - W'(F)F^\top)\cdot\nabla\tilde v ~dx + \int_\Omega  (\nabla \Hext^\top M) \cdot \tilde v~dx \nonumber\\
&\leq  C(m, H_\mathrm{ext}) \|\tilde v (t)\|_{L^2(\Omega)} \left\|\int_\Omega |\nabla M\odot\nabla M - W'(F)F^\top| ~dx\right\|_{L^\infty(0,t_2)} \nonumber \\
& \leq  \|\tilde v (t)\|_{L^2(\Omega;\R^2)} C(m,L,\mathrm{IN},H_\mathrm{ext}), \label{Iteration-vEst}
\end{align}
where $C(m,L,\mathrm{IN},\Hext)$  is obtained via Lemma \ref{LLGPARTLemParabolicEq}. In this estimate we used that we obtain $\|\nabla \tilde{v}(t)\|_{L^\infty(\Omega;\R^2)} \leq C(m) \|\tilde v(t)\|_{L^2(\Omega;\R^2)}$ for $\tilde v \in \H_m$. 
From \eqref{Iteration-vEst}, we get that 
$$
\frac{d}{dt} \|\tilde v (t)\|_{L^2(\Omega;\R^2)} \leq C(m,L,\mathrm{IN},H_\mathrm{ext});
$$
indeed, this is obvious if $\|\tilde v\|_{L^2(\Omega;\R^2)}=0$ and it follows in all other cases by rewriting $\frac{d}{dt} \|\tilde v (t)\|_{L^2(\Omega;\R^2)} = \frac{ \frac{d}{dt} \|\tilde v\|_{L^2(\Omega;\R^2)}^2 (t) }{2\|\tilde v\|_{L^2(\Omega;\R^2)} (t)} $. Thus, we see that
$$
\|\tilde v (t)\|_{L^2(\Omega;\R^2)} \leq \|\tilde v_0\|_{L^2(\Omega;\R^2)} + C(m,L,\mathrm{IN},H_\mathrm{ext})t.
$$
Now, we can define an operator 
\begin{equation}
\mathcal{L}: V_m(t^\ast) \to V_m(t^\ast), \quad v\mapsto\tilde v,
\label{operatorL}
\end{equation}
with $\tilde v$ defined as in \eqref{tilde}. Notice that the range of $\mathcal{L}$ is precompact in $C([0,t^\ast];\H_m)$. This can be seen from the Arzel\`a-Ascoli theorem since any $\tilde{v}$ in the range of $\mathcal{L}$ is obtained from \eqref{ODE-Iterations} and thus is uniformly Lipschitz continuous in time with a Lipschitz constant depending just on $L$ and $m$, the initial data through $\mathrm{IN}$ and the external magnetic field.

Moreover, we will prove in the following lemma (the proof of which is technical but straightforward and thus postponed to Section~\ref{sec:PfsLem}) that $\mathcal{L}$ is continuous.

\begin{lemma}
\label{lemma-cont}
The operator $\mathcal{L}$ defined in \eqref{operatorL} is continuous on $V_m(t^\ast)$ in the topology of $C(0,t^\ast;\H_m)$.
\end{lemma}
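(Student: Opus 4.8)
The plan is to take an arbitrary sequence $v_n\to v$ in $C([0,t^\ast];\H_m)$ with all $v_n,v\in V_m(t^\ast)$ and to show $\mathcal{L}(v_n)\to\mathcal{L}(v)$ in the same topology; in fact we shall obtain a Lipschitz bound. Since $\H_m$ is finite dimensional and spanned by smooth functions, all norms on $\H_m$ are equivalent, so $v_n\to v$ also in $C([0,t^\ast];W^{1,\infty}(\Omega;\R^2))$, with $\|v_n-v\|_{C([0,t^\ast];W^{1,\infty})}\le C(m)\,\delta_n$ where $\delta_n:=\|v_n-v\|_{C([0,t^\ast];\H_m)}\to0$, and the $v_n$ are uniformly bounded. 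Let $(F_n,M_n)$ and $(F,M)$ be the unique solutions of \eqref{Feq-Iterations}--\eqref{Meq-Iterations} associated with $v_n$ and $v$ by Lemma~\ref{LLGPARTLemParabolicEq} (all defined on $[0,t^\ast]\subseteq[0,t_1]$); they satisfy the bounds \eqref{estLemma7} uniformly in $n$, and $|M_n|=|M|=1$ a.e. The argument has three parts: (i) stability estimates $F_n\to F$ and $M_n\to M$; (ii) the resulting convergence of the force terms $D^i_m(\cdot;F_n,M_n)\to D^i_m(\cdot;F,M)$ in $L^1(0,t^\ast)$; (iii) continuous dependence for the Galerkin ODE \eqref{ODE-Iterations}, which then gives the claim.

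For (i), write $\bar F:=F_n-F$, $\bar M:=M_n-M$, both with vanishing initial data. Subtracting the two copies of \eqref{Feq-Iterations} and testing the difference with $\bar F$, the convective term drops out by $\nabla\cdot v_n=0$, and using the $W^{1,\infty}$-convergence of the velocities together with the $L^2(0,t^\ast;W^{1,2})$-bounds on $F,F_n$ from \eqref{estLemma7}, Gr\"onwall's lemma yields $\|\bar F\|_{C([0,t^\ast];L^2(\Omega))}+\|\nabla\bar F\|_{L^2(0,t^\ast;L^2(\Omega))}\le C\delta_n$. For $\bar M$ one subtracts the two copies of \eqref{Meq-Iterations} (legitimate to test with $\bar M\in W^{1,\infty}(0,t^\ast;L^2)\cap L^\infty(0,t^\ast;W^{2,2})$) in $L^2$. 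The terms $\bar M\times(\Delta M_n+H_\mathrm{ext})$ and $\bar M\times H_\mathrm{ext}$ are pointwise orthogonal to $\bar M$ and vanish; the term $M\times\Delta\bar M$ is integrated by parts and, using the boundary condition \eqref{boundaryMsimple}, turns into $-\int_\Omega\nabla M\cdot(\bar M\times\nabla\bar M)\,dx$; and the quadratic-gradient contributions $\bigl(\nabla\bar M:(\nabla M_n+\nabla M)\bigr)M_n$ and $|\nabla M|^2\bar M$ are estimated via the two-dimensional Gagliardo--Nirenberg (Ladyzhenskaya) inequality $\|\bar M\|_{L^4}^2\le C\|\bar M\|_{L^2}\|\bar M\|_{W^{1,2}}$, the $L^\infty(0,t^\ast;W^{2,2})$-bounds on $M_n,M$, and Young's inequality, so that the genuinely top-order term $\|\nabla\bar M\|_{L^2}^2$ is absorbed into the left-hand side. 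The transport term $(\bar v\cdot\nabla)M$ and the Zeeman-type terms are lower order with time-integrable coefficients by \eqref{HextRegularity}. Gr\"onwall then gives $\|\bar M\|_{C([0,t^\ast];L^2(\Omega))}+\|\nabla\bar M\|_{L^2(0,t^\ast;L^2(\Omega))}\le C\delta_n$.

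For (ii), from the definition \eqref{LLGPARTbasrepvapproxmOp} of $D^i_m$ and the stability bounds we estimate, for each $i\in\{1,\dots,m\}$,
\[
\bigl\|D^i_m(\cdot;F_n,M_n)-D^i_m(\cdot;F,M)\bigr\|_{L^1(0,t^\ast)}\le C\delta_n ,
\]
where we used $\|\nabla M_n\odot\nabla M_n-\nabla M\odot\nabla M\|_{L^1(0,t^\ast;L^1)}\le\|\nabla\bar M\|_{L^2(L^2)}(\|\nabla M_n\|_{L^2(L^2)}+\|\nabla M\|_{L^2(L^2)})$, the bound $|W''|\le C_3$ from \eqref{conditionW2a} (which makes $W'$ Lipschitz) together with the $1$-growth \eqref{conditionW1} of $W'$ and the $L^2(L^2)$-bounds on $F_n,F$, the regularity \eqref{HextRegularity} (so $\nabla\Hext\in L^1(0,t^\ast;L^4)$) paired with $\bar M\in C([0,t^\ast];L^2)\hookrightarrow C([0,t^\ast];L^{4/3})$, and the smoothness of $\xi_i$. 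Finally, for (iii), both $\tilde g_n=(\tilde g^1_n,\dots,\tilde g^m_n)$ and the corresponding vector $\tilde g$ for $v$ solve \eqref{ODE-Iterations} with the same initial datum and the same quadratic right-hand side, which is Lipschitz on the bounded subset of $\R^m$ in which all these trajectories remain (their Euclidean norm being $\le L$, since $\tilde v_n=\mathcal{L}(v_n)\in V_m(t^\ast)$); subtracting, taking absolute values and applying Gr\"onwall gives $\|\tilde g_n-\tilde g\|_{C([0,t^\ast])}\le C\|D_n-D\|_{L^1(0,t^\ast)}\le C\delta_n$, hence $\mathcal{L}(v_n)\to\mathcal{L}(v)$ in $C([0,t^\ast];\H_m)$. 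The main obstacle is the stability estimate for $\bar M$ in part (i): one must organise the test-function computation so that every top-order term either vanishes by the antisymmetry of the cross products or by the Neumann condition \eqref{boundaryMsimple}, or is absorbed after an interpolation inequality — exactly where the restriction $\Omega\subset\R^2$ and the higher regularity of $M$ from Lemma~\ref{LLGPARTLemParabolicEq} are used. (Alternatively one could argue by compactness: from any subsequence extract a further subsequence along which $(F_n,M_n)$ converges by the uniform bounds \eqref{estLemma7} and Aubin--Lions, pass to the limit in \eqref{Feq-Iterations}--\eqref{Meq-Iterations}, identify the limit with $(F,M)$ by the uniqueness in Lemma~\ref{LLGPARTLemParabolicEq}, and conclude; the direct argument above additionally shows that $\mathcal{L}$ is Lipschitz.)
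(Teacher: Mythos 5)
Your proof is correct and follows essentially the same route as the paper's: subtract the two copies of \eqref{Feq-Iterations} and \eqref{Meq-Iterations}, test with the difference, exploit the orthogonality of the cross-product terms and absorb the top-order quadratic-gradient contribution via Young, apply Gr\"onwall to obtain $F_n\to F$ and $M_n\to M$, deduce $L^1(0,t^\ast)$-convergence of the Galerkin forcings $D^i_m$, and then conclude by Gr\"onwall for the finite-dimensional ODE. The only minor technical variation is in the $\bar M$-estimate: you control the cross term $(\nabla\bar M:(\nabla M_n+\nabla M))M_n\cdot\bar M$ by the two-dimensional Ladyzhenskaya inequality and the $L^\infty(0,t^\ast;W^{2,2})$-bound, whereas the paper bounds it directly by $\|\nabla M_n\|_{L^\infty(\Omega)}, \|\nabla M\|_{L^\infty(\Omega)}$ (time-integrable via the $L^2(0,t_1;W^{3,2})$-regularity of Lemma~\ref{LLGPARTLemParabolicEq}); both rest on the same higher regularity and give equivalent Gr\"onwall coefficients.
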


Thus, Schauder's fixed point theorem assures the existence of a $$v_m \in V_m(t^\ast)$$ such that $\mathcal{L}(v_m)=v_m$. In turn, $v_m$ together with the associated pair $(F_m, M_m)$ is a discrete weak solution in the sense of Definition~\ref{def-discrSol} of the system \eqref{sys1}--\eqref{sys3} on the time interval $[0,t^\ast]$.

{\bf Step 2: A-priori estimates}\\
Let us now deduce the a-priori estimates, i.e., in particular \eqref{a-prioriI} and \eqref{a-prioriII} below. To this end, let us first multiply  equation \eqref{LLGPARTweakformMapproxm} by $-H_\mathrm{eff}=-\Delta M_m - H_\mathrm{ext}$ to get that
\begin{equation}
(\partial_t M_m + (v_m\cdot\nabla) M_m)\cdot (-H_\mathrm{eff}) = (M_m\times H_\mathrm{eff})\cdot H_\mathrm{eff} + (M_m \times M_m \times H_\mathrm{eff}) \cdot H_\mathrm{eff} = |M_m\cdot H_\mathrm{eff}|^2 - |H_\mathrm{eff}|^2 \leq  0,
\label{LLG3CrossEst}
\end{equation}
since $|M_m| = 1$ by Lemma~\ref{LLGPARTLemParabolicEq}. After plugging the definition of the effective field into this equation, we obtain
\begin{equation}
\frac{\mathrm{d}}{\mathrm{d}t} \int_\Omega \frac{1}{2}|\nabla M_m|^2 - M_m \cdot H_\mathrm{ext} dx + \int_\Omega  M_m\cdot \partial_t H_\mathrm{ext} dx -\int_\Omega ((v_m\cdot\nabla) M_m)\cdot (\Delta M_m + H_\mathrm{ext}) \leq 0.
\label{AprioriMagI}
\end{equation}
Note that for any smooth $M$ the following identity holds $\nabla\cdot\big( \nabla M\otimes\nabla M \big) = \nabla \frac{|\nabla M|^2}{2} + (\nabla M)^\top\Delta M$ and thus $\nabla\cdot\big( \nabla M\otimes\nabla M\big)\cdot v_m = \nabla \frac{|\nabla M|^2}{2} \cdot v_m + (v_m \cdot \nabla) M \Delta M$. Therefore, using integration by parts and the fact that $v_m$ is divergence free together with the vanishing boundary conditions, we obtain the following identity
$$
-\int_\Omega (\nabla M\otimes\nabla M \big)\cdot \nabla v_m \, dx= \int_\Omega (v_m \cdot \nabla) M \Delta M \, dx,
$$
which holds by approximation also for $M_m$ for almost all $t \in [0,t^\ast)$. Moreover, by integration by parts we get that 
$$
-\int_\Omega (v_m \cdot \nabla M_m)\cdot H_\mathrm{ext} = \int_\Omega \nabla \cdot v_m M_m\cdot H_\mathrm{ext} + (\nabla H_\mathrm{ext}^\top M_m)\cdot v_m \, dx = \int_\Omega (\nabla H_\mathrm{ext}^\top M_m)\cdot v_m \, dx.
$$
Plugging this into equation \eqref{AprioriMagI} leads to
\begin{align} \label{apriorimag}
&\frac{\mathrm{d}}{\mathrm{d}t} \int_\Omega \frac{1}{2}|\nabla M_m|^2 -  M_m \cdot H_\mathrm{ext} dx +\int_\Omega  M_m\cdot \partial_t H_\mathrm{ext} dx +\int_\Omega (\nabla M_m \odot \nabla M_m)\cdot \nabla v_m + (\nabla H_{\mathrm{ext}}^\top M_m)\cdot v_m \, \, dx \leq 0.
\end{align}

Let us now test \eqref{LLGPARTweakformFapproxm} with $W'(F_m)$. Notice that this is an admissible test function since for almost all $t \in [0,t^\ast)$ we have that $W'(F_m)$ is in $W^{1,2}(\Omega; \R^{2 \times 2})$. Indeed, due to growth condition \eqref{conditionW1}, $W'(F_m)$ is in $L^2(\Omega;\R^{2 \times 2})$ if $F_m\in L^2(\Omega;\R^{2 \times 2})$, which is guaranteed by Lemma~\ref{LLGPARTLemParabolicEq}. Moreover, since $W''(\cdot)$ is bounded by \eqref{conditionW2a}, $\nabla W'(F_m) = W''(F_m)\nabla F_m$ is in $L^2(\Omega;\R^{2 \times 2 \times 2})$ for almost all $t \in [0,t^\ast)$ if $\nabla F_m\in L^2(\Omega;\R^{2 \times 2 \times 2})$, which is again guaranteed by Lemma \ref{LLGPARTLemParabolicEq} where a bound on $F_m$ in $L^2(0,t^\ast; W^{1,2}(\Omega; \R^{2 \times 2}))$ is obtained. Finally, due to the continuity of the trace operator and $W'(0)=0$, we know that $W'(F_m)=0$ on $\partial\Omega$. Plugging in the test, we obtain
\begin{align*}
\frac{\mathrm{d}}{\mathrm{d}t} &\int_\Omega W(F_m) dx  +\int_\Omega \kappa \nabla F_m {\cdot} \nabla W'(F_m) + ((v\cdot\nabla) F_m - \nabla v_m F_m)W'(F_m) \, dx  \\
&=\frac{\mathrm{d}}{\mathrm{d}t} \int_\Omega W(F_m) dx  +\int_\Omega \kappa \nabla F_m {\cdot} ( W''(F_m) \nabla F_m) + (v\cdot\nabla) W(F_m) - (W'(F_m) F_m^\top)\cdot \nabla v_m \, dx  = 0.
\end{align*}
Therefore, using that $v_m$ is divergence free and plugging in condition \eqref{conditionW2}, we get that
$$
\frac{\mathrm{d}}{\mathrm{d}t} \int_\Omega W(F_m) dx  +\int_\Omega \kappa a |\nabla F_m|^2  - (W'(F_m) F_m^\top)\cdot \nabla v_m~dx \leq 0.
$$
Lastly, we deduce from \eqref{LLGPARTbasrepvapproxmODE} and \eqref{LLGPARTbasrepvapproxmOp} that $v_m=\sum_{i=1}^m g_m^i(t) \xi_i(x)$ satisfies
\begin{equation}
\int_\Omega \partial_t v_m\cdot \zeta + (v_m\cdot\nabla)v_m \cdot \zeta- \left(\nabla M_m\odot\nabla M_m - W'(F_m)F_m^\top - \nu \nabla v_m\right)\cdot \nabla \zeta -  (\nabla H_\mathrm{ext}^\top M_m)\cdot \zeta ~dx = 0
\label{v-EqDisc}
\end{equation}
for all $\zeta \in \H_m$. Testing this equality with $v_m$ itself yields
\begin{align*}
&\frac12\frac{\mathrm{d}}{\mathrm{d}t} \int_\Omega |v_m|^ 2 dx + \frac12 \int_\Omega \nu |\nabla v_m|^ 2 - (\nabla M_m\odot\nabla M_m)\cdot \nabla v_m + (W'(F_m)F_m^\top)\cdot \nabla v_m   -  (\nabla H_\mathrm{ext}^\top M_m)\cdot v_m~dx = 0,
\end{align*}
because $\int_\Omega (v_m\cdot \nabla v_m) \cdot v_m dx = 0$.
Summing the three expressions above, we get the overall energy inequality for any $t\in[0,t^\ast)$ as follows:
\begin{align}
\nonumber &\underbrace{\int_\Omega \frac{1}{2} |v_m(t)|^2 + \frac{1}{2}|\nabla M_m(t)|^2 - M_m(t) \cdot H_\mathrm{ext}(t) + W(F_m(t)) \, dx}_\text{energy at time $t$} + \underbrace{\int_0^t\!\!\!\int_\Omega \kappa a |\nabla F_m|^2 dx ds}_\text{regularization}+ \underbrace{\int_0^t\!\!\!\int_\Omega\nu |\nabla v_m|^ 2 \, dx\,ds}_\text{dissipation}  
\\ &\qquad \leq \underbrace{\int_\Omega \frac{1}{2} |v_m(0)|^2 + \frac{1}{2}|\nabla M_m(0)|^2 - M_m(0) \cdot H_\mathrm{ext}(0) + W(F_m(0)) \, dx}_\text{approximate initial energy} \underbrace{-\int_0^t\!\!\!\int_\Omega M_m\cdot  \partial_t H_\mathrm{ext} \, dx\,ds}_\text{work of external forces} \nonumber
\\ &\qquad \leq \underbrace{\int_\Omega \frac{1}{2} |v_0|^2 + \frac{1}{2}|\nabla M_0|^2 - M_0 \cdot H_\mathrm{ext}(0) + W(F_0) \, dx}_\text{initial energy}  \underbrace{-\int_0^t\!\!\!\int_\Omega M_m\cdot  \partial_t H_\mathrm{ext} \, dx\,ds,}_\text{work of external forces}
\label{energy-Est}
\end{align}
where in the last line we exploited that $F_m$ and $M_m$ already satisfy the initial conditions exactly.
From \eqref{energy-Est}, we  obtain the following estimate for any $t\in [0,t^\ast)$ 
\begin{align}
\nonumber &\int_\Omega \frac{1}{2} |v_m(t)|^2 + \frac{1}{2}|\nabla M_m(t)|^2  + W(F_m(t)) \, dx+ \int_0^t\!\!\!\int_\Omega \kappa a |\nabla F_m|^2 + \nu |\nabla v_m|^ 2 \, dx\,ds  \\ &\qquad \leq \underbrace{\int_\Omega \frac{1}{2} |v_0|^2 + \frac{1}{2}|\nabla M_0|^2 + W(F_0) \, dx + 2\|H_\mathrm{ext}\|_{L^\infty(0,T; L^1(\Omega; \R^3))} + \|\partial_t H_\mathrm{ext}\|_{L^1(0,T; L^1(\Omega; \R^3))}}_\text{IED};
\label{a-prioriI}
\end{align}
plugging in additionally \eqref{conditionW0} we obtain that 
\begin{equation}
\sup_{t \in [0,t^\ast)} \int_\Omega \frac{1}{2} |v_m(t)|^2 + \frac{1}{2}|\nabla M_m(t)|^2 + |F_m(t)|^2 \, dx+ \int_0^t\!\!\!\int_\Omega \kappa a |\nabla F_m|^2 + \nu |\nabla v_m|^ 2 \, dx\,ds \leq C(\text{IED}).
\label{a-prioriI-final}
\end{equation}

The above estimate is based on the inequality in \eqref{LLG3CrossEst}, i.e.\ on $|M\cdot H_\mathrm{eff}|^2-|H_\mathrm{eff}|^2 \leq 0$, cf.\ \eqref{AprioriMagI}. We can refine the a priori estimate in \eqref{apriorimag} by working with the following expression obtained with $H_\mathrm{eff} = \Delta M_m + H_\mathrm{ext}$.
$$
|M_m\cdot H_\mathrm{eff}|^2-|H_\mathrm{eff}|^2 = (M_m\cdot \Delta M_m)^2  + 2(M_m\cdot\Delta M_m) (M_m\cdot H_\mathrm{ext}) + (M_m\cdot H_\mathrm{ext})^2 - |\Delta M_m|^2 + 2 \Delta M_m \cdot H_\mathrm{ext} + |H_\mathrm{ext}|^ 2.
$$
For any $t\in[0,t^\ast)$ we get by the same procedure as above that 
\begin{align*}
\nonumber &\int_\Omega \frac{1}{2} |v_m(t)|^2 + \frac{1}{2}|\nabla M_m(t)|^2  + W(F_m(t)) \, dx + \int_0^t\!\!\!\int_\Omega \kappa a |\nabla F_m|^2 + \nu | \nabla v_m|^ 2 + |\Delta M_m|^2 + |H_\mathrm{ext}|^ 2 \, dx\,ds \\ &\qquad \leq \int_\Omega \frac{1}{2} |v_0|^2 + \frac{1}{2}|\nabla M_0|^2 - M_0 \cdot H_\mathrm{ext}(0) + M_m(t) \cdot H_\mathrm{ext}(t) + W(F_0) \, dx - \int_0^t\!\!\!\int_\Omega M_m\cdot  \partial_t H_\mathrm{ext} \, dx\,ds \\ &\qquad \qquad + \int_0^t\!\!\!\int_\Omega |\nabla M_m|^4  + 2(M_m\cdot\Delta M_m)(M_m\cdot H_\mathrm{ext}) + (M_m\cdot H_\mathrm{ext})^2 -  2 \Delta M_m \cdot H_\mathrm{ext} \, dx ds,
\end{align*}
where in the last term, we used that $-M_m\cdot \Delta M_m = |\nabla M_m|^2$ since the modulus of $M_m$ is equal to one. By Young's and H\"older's inequalities, this leads to 
\begin{align*}
\nonumber &\int_\Omega \frac{1}{2} |v_m(t)|^2 + \frac{1}{2}|\nabla M_m(t)|^2 + W(F_m(t)) \, dx + \int_0^t\!\!\!\int_\Omega \kappa a |\nabla F_m|^2 + \nu |\nabla v_m|^ 2 + (1-2\varepsilon^2)|\Delta M_m|^2 \, dx\,ds \\ 
&\qquad \leq \text{IED}+ \int_0^t\!\!\!\int_\Omega |\nabla M_m|^4 + \frac{1}{2\varepsilon^2} |H_\mathrm{ext}|^2 \, dx ds,
\end{align*}
where $\epsilon>0$ can be arbitrarily small. 

Now, we exploit an observation from \cite{CarbouFabrie2001}: the term $\int_0^t\!\int_\Omega |\nabla M_m|^4 \, dx dt$ on the right-hand side of the above expression can actually be absorbed into the Laplacian on the left-hand side, which yields a bound on the second gradient of $M_m$. 
To this end, observe that for any $t\in[0,t^\ast)$  
\begin{align} \label{Delta-equals-secondgrad}
 \|\Delta M_m(t)\|_{L^2(\Omega; \R^{3})} =\|\nabla^2 M_m(t)\|_{L^2(\Omega; \R^{3 \times 2 \times 2})} 
\end{align}
due  to the Neumann boundary conditions for $M_m$. Further, by Ladyzhenskaya's inequality, it holds for any $f\in W^{1,2}(\Omega; \R^{\tilde d})$ that
\begin{align} \label{Lady}
\|f\|_{L^4(\Omega;\R^{\tilde d})} &\leq  C \left( \|f\|_{L^2(\Omega;\R^{\tilde d})}+ \|\nabla f \|_{L^2(\Omega;\R^{\tilde d \times 2})}^{1/2} \|f\|_{L^2(\Omega;\R^{\tilde d})}^{1/2} \right)
\end{align}
for some $C>0$ depending on $\Omega$ only. 
Hence,
\begin{equation}\label{LLGPARTSobolevestimated2}
\|\nabla M_m\|^4_{L^4(\Omega;\R^{3\times 2})} \leq \widetilde C \left(\|\nabla M_m\|^4_{L^2(\Omega; \R^{3 \times 2})} +  
\|\nabla^2 M_m\|^2_{L^2(\Omega; \R^{3\times 2\times 2})}\|\nabla M_m\|^2_{L^2(\Omega; \R^{3 \times 2})} \right)
\end{equation}
for some $\widetilde C>0$ depending on $\Omega$ only.
Thus, we have for any $t\in[0,t^\ast)$ that
\begin{align*}
\nonumber &\int_\Omega \frac{1}{2} |v_m(t)|^2 + \frac{1}{2}|\nabla M_m(t)|^2 + W(F_m(t)) \, dx + \int_0^t\!\!\!\int_\Omega \kappa a |\nabla F_m(t)|^2 + \nu |\nabla v_m|^ 2 + (1-2\varepsilon^2)|\nabla^2 M_m|^2 \, dx\,ds \\ 
&\qquad \leq \ \text{IED} + \widetilde C \int_0^t \|\nabla M_m\|^4_{L^2(\Omega; \R^{3 \times 2})} +  \|\nabla^2 M_m\|_{L^2(\Omega; \R^{3\times 2\times 2}}^2\|\nabla M_m\|_{L^2(\Omega; \R^{3 \times 2})}^{2} ~ds + \int_0^t\int_\Omega \frac{1}{2\varepsilon^2} |H_\mathrm{ext}|^2 \, dx ds \\
&\qquad \leq \ (1+ \widetilde C T)\text{IED}  + \widetilde C\;\text{IED}\int_0^t  \|\nabla^2 M_m\|_{L^2(\Omega; \R^{3\times 2\times 2})}^2 \, ds + \frac{2}{\varepsilon^2}\|H_\mathrm{ext}\|^2_{L^2(0,T; L^2(\Omega; \R^3))},
\end{align*}
where we applied that $\|\nabla M_m\|^2_{L^2(\Omega; \R^{3 \times 2})} \leq \text{IED}$ uniformly in the time by \eqref{a-prioriI}.
Therefore, if $ \widetilde C \; \text{IED} < 1-2\varepsilon^2$, we get, additionally to \eqref{a-prioriI}, the a-priori estimate:
\begin{equation}
\|\nabla^2 M_m\|^2_{L^2(0,t^\ast;L^2(\Omega; \R^{3 \times 2 \times 2}))} \leq C(T, \text{IED}, H_\mathrm{ext}). \label{a-prioriII}
\end{equation}

Notice that, owing to estimate \eqref{LinftyH2onM} we can strengthen \eqref{a-prioriII} albeit not uniformly in the Galerkin variable $m$. Indeed, since $\|\nabla M_m(t)\|_{L^2(\Omega; \R^{3 \times 2})}$ is bounded uniformly by $\mathrm{IED}$ on $(0,t^\ast)$, we may rewrite \eqref{LinftyH2onM} as 
$$
\|\Delta M_m(t)\|_{L^2(\Omega;\R^3)} \leq \|\Delta M_0\|_{L^2(\Omega; \R^3)} + C(L,m,\mathrm{IED}, H_\mathrm{ext})\int_0^{t}(1+\|\Delta M_m(s)\|_{L^2(\Omega;\R^3)}^4) ds;
$$
whence we obtain by the Gronwall lemma that for all $t \in [0,t^\ast)$
\begin{equation}
\|\Delta M_m(t)\|_{L^2(\Omega;\R^3)}\leq C(L,m,\mathrm{IED},H_\mathrm{ext})(\|\Delta M_0\|_{L^2(\Omega; \R^3)} + T),
\label{FinalEst-LinftyH2M}
\end{equation}
where we also used that $\int_0^{t^\ast} \|\Delta M_m(s)\|_{L^2(\Omega;\R^3)}^2 ds \leq \mathrm{IED}.$

\noindent{\bf Step 3: Dual a-priori estimates}

Notice that the a-priori estimates obtained in Step 2 do not give any information on the time derivatives of the quantities $v_m$, $F_m$, $M_m$. However, these will be needed since without a uniform bound on time derivatives we cannot expect strong convergence in Bochner spaces, which in turn is crucial to pass to the limit in the non-linearities in the system. We deduce these estimates directly from the discrete system \eqref{LLGPARTweakformFapproxm}, \eqref{LLGPARTweakformMapproxm} and \eqref{v-EqDisc} itself. To this end, let $t\in [0,T]$ be such that $(v_m,F_m,M_m)$ is a weak solution in the sense of Definition~\ref{def-discrSol}.

Indeed, for the velocity (notice that the velocity is, albeit not uniformly in $m$, Lipschitz in time which makes the time integration feasible) and for any $\xi \in L^2(0,t)$ and any $\zeta \in W^{1,2}_{0,\mathrm{div}}(\Omega; \R^2)$ with 
$$
\|\xi\|_{L^2(0,t)} \leq 1 \qquad \text{and} \qquad \|\zeta\|_{W^{1,2}(\Omega; \R^2)} \leq 1
$$
we get that
\begin{align*}
&\int_0^{t} \!\!\!\!\! \int_\Omega \partial_t v_m \cdot (\xi \zeta)~dx~ds = \int_0^{t} \!\!\!\!\! \int_\Omega \partial_t v_m \cdot (\xi P_m\zeta)~dx~ds \\
&\qquad =\int_0^{t} \!\!\!\!\! \int_\Omega -(v_m\cdot\nabla) v_m (\xi P_m\zeta) + \nu \nabla v_m \cdot (\xi \nabla P_m\zeta)  +\big(\nabla M_m\odot\nabla M_m - W'(F_m) F_m^\top \big)\cdot (\xi \nabla P_m\zeta) \\ & \qquad\qquad\qquad+ (\nabla H_\mathrm{ext}^\top M_m)\cdot (\xi P_m\zeta) ~dx~ds \\
&\qquad =\int_0^{t} \!\!\!\!\! \int_\Omega   (v_m \otimes v_m) \cdot(\xi \nabla P_m \zeta) + \nu \nabla v_m \cdot (\xi \nabla P_m\zeta)  +\big(\nabla M_m\odot\nabla M_m + W'(F_m) F_m^\top \big)\cdot (\xi \nabla P_m\zeta)\\ 
& \qquad\qquad\qquad+ (\nabla H_\mathrm{ext}^\top M_m)\cdot (\xi P_m\zeta) ~dx~ds \\
&\qquad \leq \int_0^{t} \!\!\!\!\! \int_\Omega   |v_m|^2 |\xi| |\nabla P_m \zeta| + \nu |\nabla v_m||\xi||\nabla P_m\zeta|  +\big(|\nabla M_m\odot\nabla M_m| + |W'(F_m) F_m^\top| \big)|\xi| |\nabla P_m\zeta| \\ & \qquad\qquad\qquad+ |\nabla H_\mathrm{ext}^\top M_m| |\xi| |P_m\zeta| ~dx~ds \\
&\qquad \leq 
\int_0^{t} \|v_m\|^2_{L^4(\Omega;\R^2)} |\xi| \|\nabla P_m \zeta\|_{L^2(\Omega,\R^{2 \times 2})} + \nu \|\nabla v_m\|_{L^2(\Omega;\R^{2\times 2})} |\xi| \|\nabla P_m\zeta\|_{L^2(\Omega;\R^{2 \times 2})} ~ds \\
&~~~~~~~~~~~~~~~ + \int_0^{t} \big(\|\nabla M_m\odot\nabla M_m\|_{L^2(\Omega;\R^{2\times 2})}+ \|W'(F_m)F_m^\top\|_{L^2(\Omega;\R^{2 \times 2})} \big) |\xi| \|\nabla P_m \zeta\|_{L^2(\Omega;\R^{2 \times 2})} ~ds\\
&~~~~~~~~~~~~~~~ + \int_0^{t} \|\nabla \Hext\|_{L^2(\Omega;\R^{3\times 2})}  \|M_m\|_{L^\infty(\Omega;\R^{3})} |\xi| \| P_m \zeta\|_{L^2(\Omega;\R^{2})} ~ds\\
& \qquad \leq c \|v_m\|_{L^4(0,t;L^4(\Omega,\R^2))}^2 + \nu\|\nabla v_m\|_{L^{2}(0,t;L^2(\Omega))} + \|\nabla M_m\|^2_{L^{4}(0,t;L^4(\Omega;\R^{3 \times 2}))} \\
&\qquad \qquad    +c(1+ \|F_m\|_{L^{4}(0,t;L^4(\Omega;\R^{2 \times 2}))}^2) + \|\nabla \Hext\|_{L^2(0,T;L^2(\Omega;\R^{3\times 2}))}  \|M_m\|_{L^\infty(0,t;L^\infty(\Omega;\R^{3}))},
\end{align*}
where we used that $\|P_m \zeta\|_{W^{1,2}_0(\Omega; \R^2)} \leq \|\zeta\|_{W^{1,2}(\Omega; \R^2)} \leq 1$ and exploited the growth condition \eqref{conditionW1}. Notice that the terms appearing on the right-hand side of this expression are bounded by interpolation of the energetic estimate that we already obtained in the previous step. Indeed, applying Ladyzhenskaya's inequality \eqref{Lady} to $v_m$, $\nabla M_m$ and $F_m$, we get the asserted Bochner-space regularities.

Thus, taking a supremum over all $\xi$ and $\zeta$ as above, we see that 
\begin{equation}
\|\partial_t v_m\|_{L^{2}(0,t; W^{-1,2}(\Omega; \R^2))} \leq C(T, \text{IED}, H_\mathrm{ext}).
\label{a-prioriTimeDerV}
\end{equation}

In the same spirit, we deduce also estimates on the time derivatives of the magnetization $M_m$ and the deformation gradient $F_m$. Let us start with the magnetization, multiply \eqref{LLGPARTweakformMapproxm} by some arbitrary $\xi \in L^2(0,t)$ and any $\zeta \in L^2(\Omega; \R^3)$ satisfying
$$
\|\xi\|_{L^2(0,t)} \leq 1 \qquad \text{and} \qquad \|\zeta\|_{L^2(\Omega; \R^3)} \leq 1
$$
and integrate over $\Omega$ and $(0,t)$. We obtain
\begin{align*}
& \int_0^{t}\!\!\!\!\!\!\int_\Omega {\partial_t} M_m \cdot (\xi \zeta)~dx~ds \\ \leq & 
\int_0^{t}\!\!\!\!\!\!\int_\Omega  |(v_m\cdot\nabla)M_m\cdot (\xi \zeta)| + |(M_m\times (\Delta M_m+H_\mathrm{ext})) \cdot (\xi \zeta)| + |(M_m \times M_m\times (\Delta M_m+H_\mathrm{ext})) \cdot (\xi \zeta)| dx ds\\
\leq & 
\int_0^{t}\!\!\!\|v_m\|_{L^4(\Omega;\R^ 2)} \|\nabla M_m\|_{L^4(\Omega;\R^{3 \times 2})} |\xi| \|\zeta\|_{L^2(\Omega;\R^2)} + 2\big(\|\Delta M_m\|_{L^2(\Omega;\R^3)} +\|H_\mathrm{ext}\|_{L^2(\Omega;\R^3)}\big) |\xi| \|\zeta\|_{L^2(\Omega;\R^3)} \\
\leq & 
\|v_m \|_{L^{4}(0,t;L^4(\Omega;\R^2))} \|\nabla M_m\|_{L^{4}(0,t;L^4(\Omega;\R^{3 \times 2}))}  + 2(\|\Delta M_m\|_{L^{2}(0,t;L^2(\Omega;\R^ 3))} + \|H_\mathrm{ext}\|_{L^{2}(0,T;L^2(\Omega;\R^ 3))} ).
\end{align*}
We again employ \eqref{Delta-equals-secondgrad}  and the Ladyzhenskaya inequality \eqref{Lady} and obtain
\begin{equation}\label{estimateMt}
\|\partial_t M_m\|_{L^{2}(0,t;L^2(\Omega;\R^3))} \leq C(T, \text{IED}, H_\mathrm{ext}),
\end{equation}
which implies that
\begin{equation}\label{estimatenablaMt}
\|\partial_t \nabla M_m\|_{L^{2}(0,t;W^{-1,2}(\Omega;\R^{3\times 2}))} \leq C(T, \text{IED}, H_\mathrm{ext}).
\end{equation}
Let us make one more observation on $\partial_t \nabla M_m$. To this end set
$$
W^{1,2}_n(\Omega;\R^{3\times 2})) := \{X \in W^{1,2}(\Omega;\R^{3\times 2})): Xn=0 \text{ a.e. on }\partial \Omega\};
$$
here, recall that $n$ denotes the outer normal to the boundary of $\Omega$. Notice that $\nabla M_m \in W^{1,2}_n(\Omega;\R^{3\times 2}))$, hence, we would like to deduce that $\partial_t \nabla M_m \in L^2(0,t; \big(W^{1,2}_n(\Omega;\R^{3\times 2})\big)')$ for $W^{1,2}_n(\Omega;\R^{3\times 2})) \hookrightarrow L^2((\Omega;\R^{3\times 2})) \hookrightarrow \big(W^{1,2}_n(\Omega;\R^{3\times 2})\big)'$ to form a Gelfand triple (cf. e.g. \cite{Roubicek2013}). This is indeed true, since for any $g \in L^2(\Omega;\R^2)$ we have that $\nabla g \in \big(W^{1,2}_n(\Omega;\R^{3 \times 2})\big)'$. To see this, let us take $g$ smooth at first and any arbitrary $\Phi$ in $W^{1,2}_n(\Omega;\R^{3\times 2})$. Then $\Phi n=0$ on $\partial \Omega$ and we obtain
$$
\int_\Omega \nabla g \cdot \Phi \, dx = -\int_\Omega g (\nabla \cdot \Phi) \, dx \leq \|g\|_{L^2(\Omega;\R^2)} \|\Phi\|_{W^{1,2}_n(\Omega;\R^{3 \times 2})},
$$
so that the claim follows by approximation. This calculation also shows that
\begin{equation}
\label{estimatenablaMtDual}
\|\partial_t \nabla M_m\|_{L^{2}(0,t;(W_n^{-1,2}(\Omega;\R^{3\times 2}))'} \leq C(T, \text{IED}, H_\mathrm{ext}).
\end{equation}

Finally, we consider $\partial_t F_m$. To this end, let us take any arbitrary 
$$
\|\xi\|_{L^4(0,t)} \leq 1 \qquad \text{and} \qquad \|\zeta\|_{W^{1,2}(\Omega; \R^{2 \times 2})} \leq 1
$$
and estimate
\begin{align*}
& 
\int_0^{t} \llangle \partial_t F_m , \zeta  \rrangle \xi~ds 
\leq  
\int_0^{t}\!\!\!\!\! \int_\Omega \left| (v_m\cdot\nabla) F_m \cdot (\xi\zeta) \right| + \left| (\nabla v_m F_m) \cdot (\xi\zeta) \right| + \kappa \left| \nabla F_m \cdot (\xi\nabla\zeta)\right|~dx~ds \\
& \qquad \leq
\int_0^{t} \|v_m \|_{L^3(\Omega;\R^2)} \|\nabla F_m\|_{L^2(\Omega;\R^{2 \times 2 \times 2)})} |\xi| \|\zeta\|_{L^6(\Omega;\R^{2 \times 2})} + \|\nabla v_m \|_{L^2(\Omega;\R^{2 \times 2} )} \|F_m\|_{L^3(\Omega;\R^{2 \times 2})} |\xi| \|\zeta\|_{L^6(\Omega;\R^{2 \times 2})} \\
&~~~~~~~~~~~~~~~~~~~~~ + \kappa \|\nabla F_m \|_{L^2(\Omega; \R^{2 \times 2 \times 2})} |\xi| \|\nabla\zeta\|_{L^2(\Omega; \R^{2 \times 2 \times 2})}~ds \\
& \qquad 
\leq  \|v_m \|_{L^4(0,t;L^3(\Omega; \R^2))} \|\nabla F_m\|_{L^2(0,t;L^2(\Omega; \R^{2 \times 2 \times 2}))} \\
&~~~~~~~~~~~~~~~~~~~~~ + \|\nabla v_m \|_{L^2(0,t;L^2(\Omega; \R^{2 \times 2}))} \|F_m\|_{L^4(0,t;L^3(\Omega;\R^{2 \times 2} ))} + \kappa \|\nabla F_m \|_{L^{\frac43}(0,t;L^2(\Omega; \R^{2 \times 2 \times 2}))},
\end{align*}
where again all terms are bounded by the energetic estimates \eqref{a-prioriI} when taking also interpolation inequalities, analogous to those that we used in the balance of momentum, into account. In total, we obtain that
\begin{equation}\label{estimateFt}
\|\partial_t F_m\|_{L^{\frac43}(0,t;W^{-1,2}(\Omega;\R^{2\times2}))} \leq C(\text{IED}).
\end{equation}
Notice that the dual estimate \eqref{estimateFt} that we obtained for $F_m$ is slightly worse than those that we got for $M_m$ and $v_m$ in \eqref{estimatenablaMtDual} and \eqref{a-prioriTimeDerV}. Hence, proving that $F_m$ attains the right initial data will be slightly more difficult, see Step~6.

\noindent{\bf Step 4: Extending the approximate solution}\\
The approximate solution and the a-priori estimates that we obtained so far only hold on a short interval $[0,t^\ast)$. Nevertheless, they can be extended to the interval $[0,T)$ with $T$ as in Theorem~\ref{ThmSln}. Indeed, we may find a time instant $t_\ast$ such that $t_\ast$ is arbitrarily close to $t^\ast$ and $(v_m(t_\ast), F_m(t_\ast), M_m(t_\ast))$ are well defined and bounded in $L^2(\Omega; \R^ 2)  \times L^2(\Omega; \R^{2 \times 2})\times W^{1,2}(\Omega;\R^3)$ by $\mathrm{IED}$, cf.\ \eqref{a-prioriI}--\eqref{a-prioriI-final}.
Moreover, due to \eqref{FinalEst-LinftyH2M}, we can assure that $M_m(t_\ast)$ is bounded in the $W^{2,2}$-norm by a constant that only depends on $m$, $L$ (which in fact is only dependent on $\mathrm{IED}$) and $\mathrm{IED}$. Notice that since $m$ is fixed for the moment, this gives a uniform bound on the $W^{2,2}$-norm of the magnetization with respect to $m$, which is needed in Step 1.

Thus, we may regard $(v_m(t_\ast), F_m(t_\ast), M_m(t_\ast))$ as new initial data and repeat the procedure from Step 1. In Step 1, we saw that the solution interval depends only on $m$, the norms of the initial data and global properties of the external magnetic field, and thus on $\mathrm{IED}$ in our case. Hence, we conclude that there exists a constant $\delta>0$ which depends only on $m$, IED and the external field such that the system \eqref{LLGPARTweakformFapproxm}--\eqref{LLGPARTbasrepvapproxmODE} has a solution $(v_m, F_m, M_m)$ on $\Omega\times[t_\ast,t_\ast+\delta)$ coinciding with the earlier solution $(v_m, F_m, M_m)(t_\ast)$ in $t_\ast$.

Gluing the two solutions together, we thus obtain a solution on a time interval $[0,t_\ast+\delta)$. Repeating the procedure in Steps 2 and 3 then gives that the same a-priori estimates hold for the prolonged solution on the solution interval  $[0,t_\ast+\delta)$. Notice that by repeating this procedure on the whole interval  $[0,t_\ast+\delta)$ and not just on $[t_\ast,t_\ast+\delta)$ allows us to bound $(v_m(t), F_m(t), M_m(t))$ in $L^2(\Omega; \R^ 2)  \times L^2(\Omega; \R^{2 \times 2})\times W^{1,2}(\Omega;\R^3)$ for almost all $t \in [0,t_\ast+\delta)$ by IED, i.e., by the initial data and the external field, and not just by the norms of $(v_m, F_m, M_m)(t_\ast)$ and the external field.  

Thus, we can continue the extension on another time instant of length $\delta$ which is the same as above. This is due to the fact that the initial data for this extension will again be bounded by $\mathrm{IED}$.  Finally, we obtain a solution $(v_m, F_m, M_m)$ on $\Omega\times(0,T)$.

\noindent{\bf Step 5: Convergence of the approximate system}\\
From the a-priori estimates \eqref{a-prioriI} and \eqref{a-prioriII} obtained in Step 2 as well as the dual a-priori estimates \eqref{a-prioriTimeDerV}, \eqref{estimatenablaMtDual} and \eqref{estimateFt} from Step 3, we conclude by the Aubin-Lions Lemma (cf., e.g., \cite{Roubicek2013}) that, up to a non-relabeled subsequence, there exist $(v,F,M) \in L^2(0,T; W^{1,2}_{0,\mathrm{div}}(\Omega;\R^2)) \times L^2(0,T;W^{1,2}(\Omega;\R^{2 \times 2})) \times L^2(0,T;W^{2,2}(\Omega; \R^3))$ such that
\begin{eqnarray}
& v_m\to v &\quad\text{in } L^2(0,T;L^4(\Omega;\R^2)), \label{convvm} \\
& \nabla v_m \rightharpoonup \nabla v &\quad\text{in } L^2(0,T;L^2(\Omega;\R^{2\times2})), \label{convnablavm}\\
& F_m\to F &\quad\text{in } L^2(0,T;L^4(\Omega;\R^{2\times2})), \label{convFm} \\
& \nabla F_m \rightharpoonup \nabla F &\quad\text{in } L^2(0,T;L^2(\Omega;\R^{2\times2\times2})) \label{convnablaFm} \\
& \nabla M_m\to \nabla M &\quad\text{in } L^2(0,T;L^4(\Omega;\R^{3\times2})), \label{convnablaMm} \\
& \Delta M_m\rightharpoonup \Delta M &\quad\text{in } L^2(0,T;L^2(\Omega;\R^3)), \label{convDeltaMm} .
\end{eqnarray}
Moreover, due to the continuous embedding of $W^{1,4}(\Omega; \R^{3}) \hookrightarrow L^\infty(\Omega; \R^{3})$, we also have that
\begin{equation}
M_m \to  M \quad\text{in } L^2(0,T;L^\infty(\Omega;\R^3)). \label{convLinftyM}
\end{equation}
At this point, we are ready to pass to the limit in the equations \eqref{v-EqDisc}, \eqref{LLGPARTweakformFapproxm} and \eqref{LLGPARTweakformMapproxm} that form together the discrete system. Let us start with the balance of momentum \eqref{v-EqDisc}. To this end, let us choose some arbitrary $\zeta\in W^{1,2}_{0,\mathrm{div}}(\Omega;\R^2)$ and use $\zeta_m:=P_m(\zeta)\in\H_m$ as a test function in \eqref{v-EqDisc}. Moreover,  multiply this equation by $\xi\in W^{1,\infty}(0,T)$ with $\xi(T)=0$ and integrate over $[0,T)$ to obtain
\begin{align}
\nonumber &  \int_0^{T} \! \!\!\!\!\int_\Omega -v_m{\cdot} \zeta_m \xi' + (v_m{\cdot}\nabla)v_m \cdot (\xi\zeta_m) + \nu \nabla v_m \cdot (\xi\nabla\zeta_m) - \left(\nabla M_m\odot\nabla M_m - W'(F_m)F_m^\top\right) {\cdot} (\xi\nabla\zeta_m), \\
&~~~~~~~~~~~~~~~~~~~~~~~~~~~~~~ - ((\nabla H_\mathrm{ext})^\top M_m )\cdot(\xi\zeta_m)~dx~dt =\int_\Omega v_m(0)\cdot (\xi(0)\zeta_m)~dx, \label{eqconvvm}
\end{align}
where we used integration by parts with respect to time.

Applying the continuity of the Nemytski\u{i} mapping induced by $W'(\cdot)$ (cf., e.g., \cite{Roubicek2013}), we get that $W'(F_m) \to W'(F)$ in $L^2(0,T;L^4(\Omega;\R^{2\times2}))$. Therefore, by standard weak-strong convergence arguments we get that \eqref{eqconvvm} converges to 
\begin{align}
\nonumber &  
\int_0^{T} \! \!\!\!\!\int_\Omega -v{\cdot} \zeta \xi' + (v{\cdot}\nabla)v\cdot (\xi\zeta) + \nu \nabla v \cdot (\xi\nabla\zeta) - \left(\nabla M\odot\nabla M - W'(F)F^\top\right) {\cdot} (\xi\nabla\zeta)~dx~dt \\
&~~~~~~~~~~~~~~~~~~~~~~~~~~~~~~ - ((\nabla H_\mathrm{ext})^\top M )\cdot(\xi\zeta)~dx~dt =\int_\Omega v_0\cdot (\xi(0)\zeta)~dx \label{eqconvv}
\end{align}
as $m\to\infty$. 

Further, multiply \eqref{LLGPARTweakformFapproxm} by $\xi \in W^{1,\infty}(0,T)$ with $\xi(T)=0$ and integrate over $[0,T)$ to get
\begin{align}
  \int_0^{T}\! \!\!\!\!\int_\Omega - F_m \cdot (\xi'\Xi) + ((v_m\cdot\nabla)F_m -\nabla v_m F_m)\cdot (\xi\Xi)  +  \kappa \nabla F_m \cdot (\xi\nabla\Xi)~dx~dt = \int_\Omega F_0 \cdot (\xi(0)\Xi)~dx, \label{eqconvFm}
\end{align}
where we used that $F_m(0)=F_0$ and that, due to Lemma \ref{LLGPARTLemParabolicEq}, $\partial_t F_m \in L^2(0,T;W^{-1,2}(\Omega; \R^{2 \times 2}))$ and simultaneously $F_m \in L^2(0,T;W^{1,2}_0(\Omega; \R^{2 \times 2}))$ whence $F_m \in C(0,T; L^2(\Omega;\R^{2 \times 2})$ and the by-parts integration formula holds; cf., e.g., \cite[Lemma~7.3]{Roubicek2013}. Then, after integrating by parts in time, the duality pairing between $W^{-1,2}(\Omega; \R^{2 \times 2})$ and $W^{1,2}_0(\Omega; \R^{2\times 2})$ actually reduces to a scalar product on $L^2(\Omega;\R^{2 \times 2})$. 

Standard weak-strong convergence arguments allow us to identify the limit as 
\begin{align}
  \int_0^{T}\! \!\!\!\!\int_\Omega - F \cdot (\xi'\Xi) + ((v\cdot\nabla)F - \nabla v F) \cdot (\xi\Xi)  +  \kappa \nabla F \cdot (\xi\nabla\Xi)~dx~dt = \int_\Omega F_0 \cdot (\xi(0)\Xi)~dx
\end{align}
as $m\to\infty$. 

Finally, we pass to the limit in the LLG. By multiplying \eqref{LLGPARTweakformMapproxm} by $\tilde{\zeta} \in L^2(\Omega; \R^3)$ and $\xi \in W^{1,\infty}(0,T)$ with $\xi(T) = 0$ and integrating over space and time, we obtain with $M_m(0) = M_0$ that
\begin{align}
\nonumber &  \int_0^T\!\!\!\!\!\int_\Omega - M_m\cdot (\xi'\tilde\zeta) + \big((v_m\cdot\nabla) M_m + (M_m \times (\Delta M_m+H_\mathrm{ext}) - \Delta M_m\big)\cdot (\xi\tilde\zeta) ~dx~dt   \\
&~~~~~~~~~~~~~~~~~~ = \int_0^{T}\!\!\!\!\!\int_\Omega \big(|\nabla M_m |^2 M_m - M_m(M_m{\cdot}H_\mathrm{ext}) + H_\mathrm{ext} \big) \cdot (\xi\tilde\zeta)\, dxdt + \int_\Omega M_0\cdot (\xi(0)\tilde\zeta)~dx. \label{eqconvMm}
\end{align}
As $m\to\infty$, this equation converges to 
\begin{align}
\nonumber &  \int_0^T\!\!\!\!\!\int_\Omega - M\cdot (\xi'\tilde\zeta) + \big((v_m\cdot\nabla) M + (M \times (\Delta M+H_\mathrm{ext}) - \Delta M\big)\cdot (\xi\tilde\zeta) ~dx~dt   \\
&~~~~~~~~~~~~~~~~~~ = \int_0^{T}\!\!\!\!\!\int_\Omega \big(|\nabla M |^2 M - M(M{\cdot}H_\mathrm{ext}) + H_\mathrm{ext} \big) \cdot (\xi\tilde\zeta)\, dxdt + \int_\Omega M_0\cdot (\xi(0)\tilde\zeta)~dx.
\end{align}

Indeed, for the term $\int_0^{T}\!\!\int_\Omega |\nabla M_m |^2 M_m \cdot (\xi\tilde\zeta) \, dx dt$ this is obtained by the following calculation
\begin{align*}
& \left| \int_0^T\!\!\!\!\!\int_\Omega \big(| \nabla M_m |^2 M_m - |\nabla M |^2 M \big) \cdot (\xi\tilde\zeta) \, dx dt \right| \\
&\qquad  = \left| \int_0^T\!\!\!\!\!\int_\Omega \big((|\nabla M_m |^2 - |\nabla M |^2)M_m + | \nabla M |^2 (M_m - M )\big) \cdot (\xi\tilde\zeta)~dx~dt \right| \\
&\qquad  = \left| \int_0^T\!\!\!\!\!\int_\Omega \big((\nabla M_m - \nabla M){\cdot}(\nabla M_m + \nabla M)M_m + | \nabla M |^2 (M_m - M )\big) \cdot (\xi\tilde\zeta)~dx~dt \right| \\
&\qquad\leq  \| \nabla M_m + \nabla M\|_{L^2(0,T;L^4(\Omega;\R^{3 \times 2}))} \|\nabla M_m - \nabla M\|_{L^2(0,T;L^4(\Omega;\R^{3 \times 2}))} \\
&~~~~~~~~~~~~~~~~~~~~~ \times \|M_m\|_{L^\infty(0,T;L^\infty(\Omega;\R^3))} \|\xi\|_{L^\infty(0,T)}\|\tilde\zeta\|_{L^4(\Omega;\R^3))} \\
&~~~~~~~~~~~~~~~ + \|\nabla M\|^2_{L^2(0,T;L^4(\Omega;\R^{3\times2}))} \| M_m - M\|_{L^2(0,T;L^\infty(\Omega))} \|\xi\|_{L^\infty(0,T)}\|\tilde\zeta\|_{L^2(\Omega))},
\end{align*}
where the second term on the right hand side tends to zero owing to \eqref{convLinftyM} while the first term on the right hand side vanishes thanks to \eqref{convnablaMm}.

All other terms converge by a combination of weak and strong convergences in \eqref{convvm}--\eqref{convLinftyM}. Hence, the discrete solution that we constructed in Step~1 and extended in Step~4 converges in the sense of \eqref{convvm}--\eqref{convLinftyM} to a solution of \eqref{weakformv}--\eqref{weakformM}.

Moreover, the $L^\infty$-in-time regularities in Definition~\ref{def-weakSol} hold by the lower semicontinuity of norms, and since the estimate \eqref{a-prioriI-final} is uniformly in $m$ and is obtained for the entire time interval $(0,T)$.

\noindent{\bf Step 6: Attainment of the initial data}

Finally, we are left to prove that the initial data is actually attained by the solution in the sense of Definition~\ref{def-weakSol}. As for $v$ and $M$ this is fairly easy because the a-priori estimates \eqref{a-prioriI}, \eqref{a-prioriII}, \eqref{a-prioriTimeDerV}, \eqref{estimatenablaMtDual} and \eqref{estimateFt}  translate by weak lower semicontinuity to the limit so that by
\begin{align*}
v \in L^2(0,T;W^{1,2}_0(\Omega;\R^2)) \quad  &\text{and} \quad \partial_t v \in L^2(0,T;W^{-1,2}(\Omega;\R^2)),\\
M \in L^2(0,T;L^2(\Omega;\R^3)) \quad  &\text{and} \quad \partial_t M \in L^2(0,T;L^2(\Omega;\R^3)),\\
\nabla M \in L^2(0,T;W^{1,2}_n(\Omega;\R^{3 \times 2})) \quad  &\text{and} \quad \partial_t \nabla M \in L^2(0,T;(W^{1,2}_n(\Omega;\R^{3 \times 2}))'),
\end{align*}
and by, e.g., \cite[Lemma~7.3]{Roubicek2013} we have that $v\in C(0,T;L^2(\Omega;\R^2))$ and $M \in C(0,T; W^{1,2}(\Omega;\R^3))$. Moreover, we can see directly from \eqref{weakformv} that $v(0)=v_0$ a.e. in $\Omega$. Indeed, for some $\varepsilon >0$ take $\phi(x,t)=\phi_1(t)\phi_2(x)$ in such a way that $\phi_1(0)=1$, $\phi_1(t)$ linear on $(0,\varepsilon)$ and $\phi_1(t)=0$ for all $t \in [\varepsilon,T]$ while $\phi
_2\in W^{1,2}_{0,\mathrm{div}}(\Omega;\R^2)$ is arbitrary. Then, as $\varepsilon \to 0$ we have $\phi(\cdot,t) \to 0$ a.e.\ in $\Omega$ while $\partial_t \phi(t) \rightharpoonup - \delta_0$ in measures, where $\delta_0$ denotes the Dirac measure centered at $0$. Thus,
$$
\int_\Omega (v(0)-v_0) \cdot \phi_2 dx = 0,
$$
for all $\phi_2 \in W^{1,2}_{0,\mathrm{div}}(\Omega;\R^2)$, which shows the claim. The situation is analogous for $M$.

For $F$, the situation is slightly more complicated since the obtained integrability of the time derivative does not allow us to immediately form a Gelfand triple since $L^{4/3}$ (in-time integrability of the time derivative of $F$) is not dual to $L^2$ (in-time integrability of $\nabla F$). Nevertheless, we conclude from the a-priori estimates \eqref{a-prioriI}  and \eqref{estimateFt} that (notice that we actually get from \eqref{a-prioriI} that $F \in L^\infty(0,T; L^2(\Omega;\R^{2\times2})$ yields the first statement below)
\begin{align*}
F  \in L^{4}(0,T;W^{-1,2}(\Omega;\R^{2\times2})) \quad  \text{and} \quad \partial_t F  \in L^{\frac43}(0,T;W^{-1,2}(\Omega;\R^{2\times2})),
\end{align*}
which implies that (see e.g. \cite[Lemma~7.3]{Roubicek2013}) $F \in C(0,T;W^{-1,2}(\Omega;\R^{2\times2}))$; combining this with the fact that $F \in L^\infty(0,T;L^2(\Omega;\R^{2 \times 2}))$,
we have (see e.g. \cite[Chapter~III, Lemma~1.4]{Temam1977}) that
\begin{equation*}
F \in C(0,T;L_w^2(\Omega;\R^{2 \times 2})),
\end{equation*}
where $L_w^2(\Omega;\R^{2 \times 2})$ is the space of $L^2$-functions whose values are $2\times 2$-matrices and which are equipped with the weak topology. Moreover, by the same procedure as above, we may identify that $F(0) = F_0$ whence 
\begin{align} \label{Ftweak}
F(t) \rightharpoonup F_0 \quad \text{in } L^2(\Omega;\R^{2 \times 2}) \quad \text{as }t\to0_+.
\end{align}
By the convexity of $W$ this translates to
$$
\int_\Omega W(F_0) dx \leq \liminf_{t\to 0_+} \int_\Omega W(F(t)) dx.
$$
On the other hand, the energy estimate \eqref{energy-Est} also translates to the limit by weak$^*$ lower semicontinuity of the energy with respect to the convergence of $(v_m,F_m,M_m) \in L^\infty(0,T;L^2(\Omega;\R^2)) \times L^\infty(0,T;L^2(\Omega;\R^{2\times2}))\times L^\infty(0,T;W^{1,2}(\Omega;\R^3))$. Hence
\begin{align*}
&\int_\Omega \frac{1}{2} |v(t)|^2 + \frac{1}{2}|\nabla M(t)|^2 - M(t) \cdot H_\mathrm{ext}(t) + W(F) \, dx + \int_0^t\!\!\!\int_\Omega \kappa a |\nabla F|^2 + \nu |\nabla v|^ 2 \, dx\,dt 
\\ &\qquad \leq \int_\Omega \frac{1}{2} |v_0|^2 + \frac{1}{2}|\nabla M_0|^2 - M_0 \cdot H_\mathrm{ext}(0) + W(F_0) \, dx - \int_0^t\!\!\!\int_\Omega M\cdot  \partial_t H_\mathrm{ext} \, dx\,dt,
\end{align*}
for almost all $t \in(0,T)$. By continuity, we may extend the estimate to hold even for all $t \in (0,T)$. Thus, taking the $\limsup_{t \to 0^+}$ and using the already proved attainment of initial data (as well as the continuity of the external field in time) we get that
$$
\limsup_{t \to 0^+} \int_\Omega W(F(t)) dx \leq \int_\Omega W(F_0) dx,
$$
so that altogether $\int_\Omega W(F(t)) dx  \to \int_\Omega W(F_0) dx$. By the convexity and growth of $W$ this means that 
$$
\lim_{t \to 0^+} \|F(t)\|_{L^2(\Omega;\R^{2 \times 2})} = \|F_0\|_{L^2(\Omega;\R^{2 \times 2})},
$$
which combined with the already obtained weak convergence of $F(t)$ to $F_0$ in \eqref{Ftweak} means that the initial data are attained in the strong sense as claimed.
\end{proof}

\section{Proofs of Lemma \ref{LLGPARTLemParabolicEq} and Lemma \ref{lemma-cont}}
\label{sec:PfsLem}

\begin{proof}[Proof of Lemma \ref{LLGPARTLemParabolicEq}] 
Recall that for a fixed $v \in V_m(t_0)$, we aim to construct $(F,M)$ satisfying
\begin{align}
& \llangle {\partial_t} F , \Xi \rrangle + \int_\Omega (v \cdot\nabla)F : \Xi - (\nabla v F) : \Xi + \kappa \nabla F \tp \nabla\Xi~dx = 0 & &\text{ in } (0,t_1),  \label{Feq-Iterations1}\\
 & \nonumber \partial_t M + (v\cdot\nabla) M = |\nabla M|^2M + \Delta M \\& \qquad \qquad \qquad \qquad - M \times (\Delta M + H_\mathrm{ext}) - M(M\cdot H_\mathrm{ext}) + H_\mathrm{ext}  & &\text{ in } \Omega\times(0,t_1), 
\label{Meq-Iterations1}
\end{align}
for all  $\Xi\in W^{1,2}_0(\Omega;\R^{2\times 2})$, together with the initial conditions \eqref{initialF}--\eqref{initialM} and boundary conditions \eqref{simpleboundaryF}--\eqref{boundaryMsimple}.

Notice that the two equations \eqref{Feq-Iterations1} and \eqref{Meq-Iterations1} are decoupled. Consequently, we can prove existence separately. To prove the existence, we rely on similar methods as in the proof of Theorem \ref{ThmSln}; i.e., we use a Galerkin approximation and standard ODE theory to prove existence of approximate solutions. Thus, existence of solutions is proved at first on some short time interval $[0,\tilde t)$ for some $0<\tilde t\leq t_1$, but we can extend the solution later to the entire interval $[0,t_1]$ due to the a priori estimates obtained.

\noindent{\bf Existence of weak solution to \eqref{Feq-Iterations1}:} As for the Galerkin approximation, we project $F$ and the equation \eqref{Feq-Iterations1} on finite dimensional subspaces of the eigenfunctions of the Laplace-operator that form an orthonormal basis of $L^2(\Omega; \R^{2 \times 2})$ and an orthogonal basis of $W^{1,2}(\Omega; \R^{2 \times 2})$.
Let $\overline{P_k}: L^2(\Omega;\R^{2\times 2})\to  \{ \Xi_1,\Xi_2,\ldots,\Xi_k \}$ be this orthonormal projection. 

For a fixed $k\in\N$, we look for a function $F_k$ of the form
\begin{equation}\label{basrepFapproxm}
F_k(x,t) = \sum_{i=1}^k d_k^i(t)\Xi_i(x)
\end{equation}
solving the projection of \eqref{Feq-Iterations1} on the $\mathrm{span} \{ \Xi_1,\Xi_2,\ldots,\Xi_k \}$; i.e. the ODE
\begin{equation}\label{basrepFapproxmODE}
\frac{d}{dt} d_k^i(t) = -\kappa\mu_i d_k^i(t) + \sum_{j=1}^k d_k^j(t) \tilde A_{j}^i(t), \qquad i=1,\ldots,k,
\end{equation}
where
\begin{equation}
\label{basrepFapproxmOpA}
\tilde A_{j}^i(t) = -\int_\Omega (v(x,t)\cdot\nabla)\Xi_j(x) : \Xi_i(x) - (\nabla v(x,t) \Xi_j(x)) : \Xi_i(x)~dx.
\end{equation}
The initial condition becomes
\begin{equation}\label{initialphiiapproxmODE}
d_k^i(0) = \int_\Omega F_0(x):\Xi_i(x)~dx
\end{equation}
for $i=1,\ldots,k$.
We apply Carath\'eodory's existence theorem (see, e.g., \cite[Chapter~1, Theorem~1]{Filippov1988}) to obtain  absolutely continuous solution $d_n^i(t)$ of \eqref{basrepFapproxmODE} on the interval $[0,\tilde{t})$. Notice that the solution interval will thus depend only on the intial condition and the $L^\infty(0,t_1; W^{1,\infty}(\Omega;\R^2))$-norm of $v$; i.e. $m$ and $L$. Notice also that, since the right-hand side of \eqref{basrepFapproxmODE} is locally Lipschitz, the obtained solution is unique.

We now prove all the needed a-priori estimates. To this end, let us first sum \eqref{basrepFapproxmODE} over all $i=1\ldots k$ to get 
\begin{equation}\label{Fpdelemproof}
\int_\Omega \big(\partial_t F_k  + (v\cdot\nabla)F_k - \nabla v F_k) \big) \cdot \Xi dx + \int_\Omega \kappa \nabla F_k \cdot \nabla\Xi dx = 0,
\end{equation}
for all $\Xi \in \mathrm{span} \{ \Xi_1,\Xi_2,\ldots,\Xi_n \}$. Let us now test \eqref{Fpdelemproof} by $F_k$ and integrate over $[0,t]$ for $t\leq \tilde t$ to find
\begin{align*}
\frac12\int_\Omega |F_k (t)|^2~dx + \int_0^t \!\!\!\int_\Omega (v\cdot\nabla)\frac{|F_k|^2}{2} - \nabla v \cdot (F_kF_k^\top) + \kappa |\nabla F_k|^2~dx~ds = \frac12\int_\Omega | \overline{P}_k(F_0) |^2 dx \leq \frac12\int_\Omega | F_0 |^2 dx
\end{align*}
As the second expression vanishes because $v$ is divergence free, we get, by rearranging,
\begin{align}\label{FestimateONE}
\nonumber  &\frac12 \int_\Omega |F_k(t)|^2~dx + \int_0^t\!\!\!\int_\Omega \kappa |\nabla F_k|^2~dx~ds 
\leq  \int_0^t\!\!\!\int_\Omega | \nabla v : (F_kF_k^\top)|~dx~ds + \frac12\int_\Omega | F_0 |^2~dx \\
&\qquad \leq C(L,m)+\|F_0\|_{L^2(\Omega;\R^{2\times2})} + \frac{\kappa}{2}\int_0^t\!\!\!\int_\Omega |F_k|^2~dx~ds,
\end{align}
where in the last line we used that $ \| \nabla v \|_{L^\infty(0,t;L^\infty(\Omega;\R^2))} \leq C(L,m)$ since $v \in V_m(t_0)$.
Applying Gronwall's inequality yields that
\begin{equation}
\|F_k\|_{L^\infty(0,\tilde t;L^2(\Omega;\R^{2\times2})) \cap L^2(0,\tilde t;W^{1,2}(\Omega;\R^{2 \times 2})) } \leq C(L,m)+\|F_0\|_{L^2(\Omega;\R^{2\times2})}.
\end{equation}
Notice that from this estimate it follows that we may extend the approximate solution onto the interval $[0,t_0)$ by the same procedure as in Step 4 of the proof of Theorem \ref{ThmSln}.
Next, we derive an estimate on the time derivative $\partial_t F_k$ in $L^2(0,t_0; W^{-1,2}(\Omega;\R^{2\times 2}))$. To this end, let us choose some arbitrary $\zeta \in L^2(0,t_0)$ and $\Xi \in W^{1,2}_0(\Omega;\R^{2 \times 2})$ satisfying 
$$
\|\zeta\|_{L^2(0,t_0)}\leq 1 \qquad \text{and} \qquad \|\Xi\|_{W^{1,2}_0(\Omega;\R^{2\times 2})} \leq 1
$$
and calculate
\begin{align*}
&\int_0^{t_0} \partial_t F_k \cdot (\overline{P_k}\Xi)\zeta~dt = \int_0^{t_0} \partial_t F_k \cdot \Xi\zeta~dt =
\int_0^{t_0} \!\!\!\! \int_\Omega -(v\cdot\nabla) F_k : (\zeta\Xi) + (\nabla v F_k) : (\zeta\Xi) - \kappa \nabla F_k \tp (\zeta\nabla\Xi)~dx~dt \\
&\qquad \leq
\int_0^{t_0} \Big( \big(\|v\|_{L^\infty(\Omega;\R^2)} \|\nabla F_k\|_{L^2(\Omega;\R^{2 \times 2 \times 2})} + \|\nabla v\|_{L^\infty(\Omega; \R^{2 \times 2})} \|F_k\|_{L^2(\Omega;\R^ {2 \times 2})}\big)\|\Xi\|_{L^2(\Omega;\R^{2 \times 2})} \\ &\qquad \qquad + \kappa \|\nabla F_k \|_{L^2(\Omega;\R^{2 \times 2 \times 2})} \|\nabla\Xi\|_{L^2(\Omega;\R^{2 \times 2 \times 2 \times 2})} \big) |\zeta| ~dt \\
&\qquad \leq C(L,m) \|F_k\|_{L^2(0,t_0; W^{1,2}(\Omega;\R^{2 \times 2})};
\end{align*}
and since for $\|F_k\|_{L^2(0,t_0; W^{1,2}(\Omega;\R^{2 \times 2})}$ we already got an estimate in \eqref{FestimateONE}, we see that 
\begin{equation}\label{dualestimateF}
\|\partial_t F_k \|_{L^2(0,t_0; W^{-1,2}(\Omega;\R^{2\times 2}))} \leq C(L,m) + \|F_0\|_{L^2(\Omega;\R^{2\times2})}.
\end{equation}
 From the preceding estimates, we see that may extract a subsequence (not relabeled) from $(F_k)_{n \in \mathbb{N}}$ such that
%
%
\begin{eqnarray}
& F_k \rightharpoonup F &\quad\text{in } L^2(0,t_0;L^2(\Omega;\R^{2\times 2})), \label{convphiin} \\
& \partial_t F_k\rightharpoonup \partial_t F &\quad\text{in } L^2(0,t_0;W^{-1,2}(\Omega;\R^{2\times 2})), \label{convddtphiin} \\
& \nabla F_k \rightharpoonup \nabla F &\quad\text{in } L^2(0,t_0;L^2(\Omega;\R^{2\times 2\times 2})).\label{convnablaphiin}
\end{eqnarray}
As, by fixing $v$, \eqref{Fpdelemproof} is a linear, we may pass with $k \to \infty$ to get that $F$ solves \eqref{Feq-Iterations1}. Owing to the linearity once again, this is the unique solution of \eqref{Feq-Iterations1}.

\noindent{\bf Existence of weak solutions to \eqref{Meq-Iterations1}}: Let us now prove existence of solutions as well as suitable a-priori estimates for \eqref{Meq-Iterations1}. The procedure to obtain those is inspired by \cite{CarbouFabrie2001}. As above, we perform a Galerkin approximation; to this end, let $\{\eta_i\}_{i=1}^\infty \subset C^\infty(\overline{\Omega};\R^3)$ be an orthonormal basis of $L^2(\Omega;\R^3)$ and an orthogonal basis of $W^{2,2}_n(\Omega;\R^3)$,
where 
$$
W^{2,2}_n(\Omega;\R^3) = \{X\in W^{2,2}_n(\Omega;\R^3): \nabla X n=0 \text{ a.e. on }\partial \Omega \}.
$$
For example, this basis may be composed of eigenfunctions of the operator $\Delta^2 + \mathrm{id}$ subject to vanishing Neumann boundary condition for the eigenfunction and its Laplacian. Let $\widetilde P_k:L^2(\Omega;\R^3)\to \mathrm{span} \{ \eta_1,\eta_2,\ldots,\eta_k \}$ be the orthonormal projection onto finite dimensional subspaces formed by this basis. For a fixed $k\in\N$, we look for a function $M_k$ of the form
\begin{equation}\label{LLGPARTbasrepMapproxm}
M_k(x,t) = \sum_{i=1}^k h_k^i(t)\eta_i(x).
\end{equation}
that satisfies the projection of \eqref{Meq-Iterations1} onto $\mathrm{span} \{ \eta_1,\eta_2,\ldots,\eta_k \}$; this amounts to solving the following ODE 
\begin{align}\label{LLGPARTbasrepMapproxmODE}
\frac{d}{dt} h_k^i(t) & = \sum_{j=1}^k h_k^j(t) \hat A_{j}^i(t) + \sum_{j,l=1}^k h_k^j(t) h_k^l(t) \hat B_{jl}^i + \sum_{j,l,m=1}^k h_k^j(t) h_k^l(t) h_k^m(t) \hat C_{jlm}^i,\qquad i=1,\ldots,k,
\end{align}
where
\begin{align}
\label{LLGPARTbasrepMapproxmOpA}
\hat A_{j}^i(t) & = -\int_\Omega \big( (v(x,t)\cdot\nabla)\eta_j(x) + (\eta_j(x)\times \Hext(x,t) ) - \Delta\eta_j(x) - \Hext(x,t) \big) \cdot \eta_i(x)
~dx, \\
\label{LLGPARTbasrepMapproxmOpB}
\hat B_{jk}^i & = -\int_\Omega (\eta_j(x) \times \Delta\eta_k(x) + (\eta_k(x)\cdot\Hext) \eta_j(x) ) \cdot \eta_i(x)~dx, \\
\label{LLGPARTbasrepMapproxmOpC}
\hat C_{jkl}^i & = \int_\Omega (\nabla\eta_j(x) : \nabla\eta_k(x)) (\eta_l(x) \cdot \eta_i(x))~dx.
\end{align}
The initial condition becomes
\begin{equation}\label{LLGPARTinitialMapproxmODE}
h_k^i(0) = \int_\Omega M_0(x) \cdot \eta_i(x)~dx, \qquad i=1,\ldots,n.
\end{equation}
Existence of unique Lipschitz continuous solutions $h_n^i(t)$ is also here obatined by Carath\'eodory's existence theorem on a time interval $[0,t^{**})$. Notice that the length of the solution interval depends just on the $L^2(\Omega;\R^3)$ of $\Hext$ (which is controled by assumption uniformly on $[0,T]$) and the $L^\infty(\Omega; \R^2)$ norm of $v$ (which is controled uniformly by $C(m,L)$ on $[0,t_0]$ since $v \in V_m(t_0)$).

In order to deduce suitable a-priori estimates, we first rewrite \eqref{LLGPARTbasrepMapproxmODE} as 
\begin{equation}
\int_\Omega \big(\partial_t M_k + (v\cdot\nabla) M_k - |\nabla M_k|^2M_k - \Delta M_k + M_k \times (\Delta M_k + H_\mathrm{ext}) + M_k(M_k\cdot H_\mathrm{ext}) - H_\mathrm{ext} \big) \eta \, dx = 0
\label{Mapprox-eq}
\end{equation}
for all $\eta \in \mathrm{span} \{ \eta_1,\eta_2,\ldots,\eta_k \}$. Let us first test \eqref{Mapprox-eq} by $M_k$ to obtain 
\begin{align}\label{LLGPARTMLemmaEstimate1}
\nonumber  \frac{d}{dt} &\frac{1}{2} \int_\Omega |M_k|^2 \, dx + \int_\Omega |\nabla M_k| \, dx  =  \int_\Omega |\nabla M_k|^2 |M_k|^2 - |M_k|^2(M_k\cdot\Hext) + M_k\cdot\Hext~dx \\
&\leq 2\left( \|M_k\|_{L^\infty(\Omega;\R^3)}^2 \|\nabla M_k\|_{L^2(\Omega;\R^{3\times2})}^2 + \big(\|M_k\|_{L^\infty(\Omega;\R^3)}^2 +1\big)\|M_k\|_{L^\infty(\Omega;\R^3)}\|\Hext\|_{L^1(\Omega;\R^3)} \right).
\end{align}
Next, we test \eqref{Mapprox-eq} by $\Delta^2 M_n$ and obtain for all $t \in [0,t^{**})$
\begin{align}\label{LLGPARTMLemmaEstimate2}
\nonumber& \frac12\frac{d}{dt} \int_\Omega |\Delta M_k|^2 \, dx + \int_\Omega |\nabla\Delta M_k|^2 \, dx \\  &\qquad \leq 
\underbrace{\int_\Omega |(v\cdot\nabla)M_k \cdot \Delta^2 M_k| \, dx}_{:= \mathrm{I}_1} + \underbrace{\int_\Omega |(M_k\times (\Delta M_n + \Hext)) \cdot \Delta^2 M_k| \, dx}_{:= \mathrm{I}_2} + \underbrace{\int_\Omega\big||\nabla M_k|^2 M_k \cdot \Delta^2 M_k \big| \, dx}_{:= \mathrm{I}_3} \nonumber \\ &\qquad \qquad + \underbrace{ \int_\Omega |(M_k\cdot\Hext)M_k \cdot \Delta^2 M_k| \, dx}_{:= \mathrm{I}_4} + \underbrace{\int_\Omega|\Hext\cdot\Delta^2 M_k|~dx}_{:= \mathrm{I}_5}
\end{align}
We will estimate the integrals $\mathrm{I}_1$--$\mathrm{I}_5$ separately. To do so, we will utilize the following estimates which hold for all smooth $M:\Omega \to \R^3$ ($\Omega \subset \R^2$) with zero Neumann boundary conditions:
\begin{align}
\label{LLGPARTcarbouest21}
\|M\|_{W^{2,2}(\Omega;\R^3)} & \leq C \left( \|M\|_{L^2(\Omega;\R^3)}^2 + \|\Delta M\|_{L^2(\Omega;\R^3)}^2 \right)^{\frac12}\\
 \|\nabla M\|_{L^4(\Omega;\R^{3\times 2})} & \leq C \|\nabla M\|_{L^2(\Omega;\R^{3\times 2})}^{\frac12} \left( \|\nabla M\|_{L^2(\Omega;\R^{3\times 2})}^2 + \|\Delta M\|_{L^2(\Omega;\R^3)}^2 \right)^{\frac14}
\label{LLGPARTcarbouest561}\\
 \|\nabla M\|_{L^6(\Omega;\R^{3\times 2})} & \leq C \|\nabla M\|_{L^2(\Omega;\R^{3\times 2})}^{\frac13}\left( \|\nabla M\|_{L^2(\Omega;\R^{3\times 2})}^2 + \|\Delta M\|_{L^2(\Omega;\R^3)}^2 \right)^{\frac13}
\label{LLGPARTcarbouest562}\\
\|\nabla M\|_{L^\infty(\Omega;\R^{3\times 2})} & \leq C \|\nabla M\|_{L^2(\Omega;\R^{3\times 2})}^{\frac12}\left( \|\nabla M\|_{L^2(\Omega;\R^{3\times 2})}^2 + \|\Delta M\|_{L^2(\Omega;\R^3)}^2 + \|\nabla\Delta M\|_{L^2(\Omega;\R^{3\times 2})}^2 \right)^{\frac14}
\label{LLGPARTcarbouest563}
\\
\|\Delta M\|_{L^4(\Omega;\R^3)} & \leq C \|\Delta M\|_{L^2(\Omega;\R^3)}^{\frac12} \left( \|\Delta M\|_{L^2(\Omega;\R^3)}^2 + \|\nabla\Delta M\|_{L^2(\Omega;\R^{3\times 2})}^2 \right)^{\frac14} \label{LLGPARTcarbouest564}
\end{align}
for some constant $C > 0$ depending just on $\Omega$. Indeed, \eqref{LLGPARTcarbouest21} is a variant of the Poincar\'{e} inequality after realizing that $\|\nabla^2 M\|_{L^2(\Omega;\R^{3 \times 2 \times 2})}^2 = \|\Delta M\|_{L^2(\Omega;\R^3)}^2$ by integration by parts due to the vanishing Neumann boundary conditions. Further, \eqref{LLGPARTcarbouest562} and \eqref{LLGPARTcarbouest564} are variants of the Ladyzhenskaya inequality formulated here for functions the traces of which do not necessarily vanish on $\partial \Omega$ while \eqref{LLGPARTcarbouest562} is a more general interpolation inequality obtained from the Gagliardo-Nirenberg theorem. Finally, \eqref{LLGPARTcarbouest563} is a variant of the Agmon inequality valid in 2D.  

We start to estimate the term $\mathrm{I}_1$ and get, since $v\in V_m(t_0)$,
\begin{align}
\nonumber \mathrm{I}_1 &\leq \int_\Omega |(\nabla v(\nabla M_k)^\top) \cdot \nabla\Delta M_k| + |(v\cdot\nabla)\nabla M_k \cdot \nabla\Delta M_k|~dx \\
& \leq  \|\nabla v\|_{L^\infty(\Omega;\R^{2 \times 2})} \|\nabla M_k\|_{L^2(\Omega; \R^{3 \times 2})} \|\nabla\Delta M_k\|_{L^2(\Omega; \R^{3 \times 2})}  + \|v\|_{L^\infty(\Omega; \R^2)} \|\nabla^2 M_k\|_{L^2(\Omega; \R^{3 \times 2 \times 2})} \|\nabla\Delta M_k\|_{L^2(\Omega; \R^{3 \times 2})} \nonumber\\
 &\leq C(L,m)\left( \|\nabla M_k\|_{L^2(\Omega; \R^3)} + \|\Delta M_k\|_{L^2(\Omega; \R^3)} \right) \|\nabla\Delta M_k\|_{L^2(\Omega; \R^{3 \times 2})}, \label{LLGPARTestI1}
\end{align}
where we used \eqref{Delta-equals-secondgrad} and \eqref{LLGPARTcarbouest21}.
For the integral term $\mathrm{I}_2$, we obtain
\begin{align}
\nonumber \mathrm{I}_2 &\leq \int_\Omega |(\nabla M_k\times (\Delta M_k + \Hext)) \cdot \nabla\Delta M_k| + |(\nabla M_k\times (\Hext+\nabla\Hext)) \cdot \nabla\Delta M_k|~dx \\ \nonumber
&\leq\|\nabla M_k\|_{L^4(\Omega;\R^{3\times 2})} \big(\|\Delta M_k\|_{L^4(\Omega;\R^3)} +2\|\Hext\|_{W^{1,4}(\Omega;\R^3)}\big) \|\nabla\Delta M_k\|_{L^2(\Omega;\R^{3\times 2})} \\
\nonumber &\leq C \|\nabla M_k\|_{L^2(\Omega;\R^{3\times 2})}^{\frac12}\left( \|\nabla M_k\|_{L^2(\Omega;\R^{3\times 2})}^2 + \|\Delta M_k\|_{L^2(\Omega;\R^3)}^2 \right)^{\frac14} \nonumber \\
 &~~~\times  \bigg(\|\Delta M_k\|_{L^2(\Omega;\R^3)}^{\frac12} \left( \|\Delta M_k\|_{L^2(\Omega;\R^3)}^2 + \|\nabla\Delta M_k\|_{L^2(\Omega;\R^{3\times 2})}^2 \right)^{\frac14} + 2\|\Hext\|_{W^{1,4}(\Omega;\R^3)} \bigg) \|\nabla\Delta M_k\|_{L^2(\Omega;\R^{3\times 2})}.  \label{LLGPARTestI2}
\end{align}
We estimate the integral term $\mathrm{I}_3$ and find out that
\begin{align}
\nonumber \mathrm{I}_3 &= \int_\Omega |(2  (\nabla^2 M_k \nabla M_k )  \otimes M_k) \cdot \nabla\Delta M_k| + |\nabla M_k|^2 | \nabla M_k \cdot \nabla\Delta M_k|~dx \\ 
&\leq 2 \|M_k\|_{L^\infty(\Omega;\R^3)} \|\nabla M_k\|_{L^\infty(\Omega;\R^{3\times 2})} \|\nabla^2 M_k\|_{L^2(\Omega;\R^{3\times 2\times 2})} \|\nabla\Delta M_k\|_{L^2(\Omega;\R^{3\times 2})} \nonumber \\&\qquad + \|\nabla M_k\|_{L^6(\Omega;\R^{3\times 2})}^3 \|\nabla\Delta M_k\|_{L^2(\Omega;\R^{3\times 2})} \nonumber\\
\nonumber &\leq C \Big(\|M_k\|_{L^\infty(\Omega;\R^3)} \|\Delta M_k\|_{L^2(\Omega;\R^3)} \|\nabla M_k\|_{L^2(\Omega;\R^{3\times 2})}^{\frac12} \left( \|\nabla M_k\|_{L^2(\Omega;\R^{3\times 2})}^2 + \|\Delta M_k\|_{L^2(\Omega;\R^3)}^2 + \|\nabla\Delta M_k\|_{L^2(\Omega;\R^{3\times 2})}^2 \right)^{\frac14}
\\&\qquad +\|\nabla M_k\|_{L^2(\Omega;\R^{3\times 2})} \left( \|\nabla M_k\|_{L^2(\Omega;\R^{3\times 2})}^2 + \|\Delta M_k\|_{L^2(\Omega;\R^3)}^2 \right)  \Big) \|\nabla\Delta M_k\|_{L^2(\Omega;\R^{3\times 2})}.\label{LLGPARTestI3}
\end{align}
For the integral term $\mathrm{I}_4$, we estimate
\begin{align}
\nonumber \mathrm{I}_4 & = \int_\Omega |(M_k\cdot\Hext)(\nabla M_k \cdot \nabla\Delta M_k)| + |(\nabla\Delta M_k)^ \top M_k)\cdot ((\nabla M_k)^\top\Hext + (\nabla\Hext)^\top M_k)|~dx \\
& \leq  \|M_k\|_{L^\infty(\Omega; \R^3)} \|\Hext\|_{W^{1,3}(\Omega;\R^3)} \big(2\|\nabla M_k\|_{L^6(\Omega;\R^ {3 \times 2})}+\|M_k\|_{L^\infty(\Omega; \R^3)}\big) \|\nabla\Delta M_k\|_{L^2(\Omega;\R^ {3 \times 2})} \nonumber \\
&\leq \|M_k\|_{L^\infty(\Omega; \R^3)} \|\Hext\|_{W^{1,3}(\Omega;\R^3)} \big(2\|\nabla M_n\|_{L^2(\Omega)}^2+2\|\Delta M_k\|_{L^2(\Omega; \R^3)}^2+\|M_k\|_{L^\infty(\Omega; \R^3)}\big) \|\nabla\Delta M_k\|_{L^2(\Omega;\R^ {3 \times 2})}, \label{LLGPARTestI4}
\end{align}
where to get the last expression we used \eqref{LLGPARTcarbouest562} combined with the Young inequality. Finally, estimating the integral term $I_5$ yields
\begin{align}
\mathrm{I}_5
&\leq \|\nabla\Hext\|_{L^2(\Omega)} \|\nabla\Delta M_n\|_{L^2(\Omega)}. \label{LLGPARTestI5}
\end{align}
Combining \eqref{LLGPARTestI1}--\eqref{LLGPARTestI5}, we obtain from \eqref{LLGPARTMLemmaEstimate2} and an iterative application of Young's inequality that
\begin{align}\label{LLGPARTMLemmaEstimateH2Extensionpreconv}
\nonumber& \frac12\frac{d}{dt} \int_\Omega |\Delta M_k|^2 \, dx + \frac12\int_\Omega |\nabla\Delta M_k|^2 \, dx \leq  C(L,m) \left( \|\nabla M_k\|_{L^2(\Omega;\R^{3\times 2})}^4 + \|\Delta M_k\|_{L^2(\Omega;\R^3)}^4 \right) \\&\qquad + C(\Hext)\big(1+\|M_k\|_{L^\infty(\Omega;\R^3)}^4\big)\big(1+\|\nabla M_k\|_{L^2(\Omega;\R^{3 \times 2})}^2\big) \left( \|\nabla M_k\|_{L^2(\Omega;\R^{3\times 2})}^2 + \|\Delta M_k\|_{L^2(\Omega;\R^3)}^2 +1 \right)^ 2 
\end{align}
We shall make use of \eqref{LLGPARTMLemmaEstimateH2Extensionpreconv} later to derive \eqref{LinftyH2onM}. But in order to derive further a-priori estimates, let us use that $W^{2,2}(\Omega; \R^3)$ embeds continuously into $W^{1,2}(\Omega; \R^3)$ as well as $L^\infty(\Omega;\R^3)$ so that with the help of \eqref{LLGPARTcarbouest21} we have that
\begin{align*}
\|\nabla M_k\|_{L^2(\Omega;\R^{3\times 2})}&\leq C \left( \|M\|_{L^2(\Omega;\R^3)}^2 + \|\Delta M\|_{L^2(\Omega;\R^3)}^2 \right)^{\frac12}, \\
\|M_k\|_{L^\infty(\Omega;\R^3)}&\leq C \left( \|M\|_{L^2(\Omega;\R^3)}^2 + \|\Delta M\|_{L^2(\Omega;\R^3)}^2 \right)^{\frac12}.
\end{align*}
Using this in \eqref{LLGPARTMLemmaEstimateH2Extensionpreconv} and adding it to \eqref{LLGPARTMLemmaEstimate1} lets us deduce that
\begin{align}\label{LLGPARTMLemmaEstimatetotal}
\nonumber & \frac{d}{dt} \int_\Omega |M_k|^2 + |\Delta M_k|^2 dx + \int_\Omega |\nabla M_n|^2 + |\nabla\Delta M_n|^2 dx 
\\& \qquad \leq  C(L,m,\Hext) \left( 1 + \left( \|M_k\|_{L^2(\Omega;\R^3)}^2 + \|\Delta M_n\|_{L^2(\Omega;\R^3)}^2 \right)^{8} \right).
\end{align}
In the next step, we make use of the following classical comparison lemma (see \cite[Lemma~2.4]{CarbouFabrie2001}), which we state without a proof:
\begin{lemma}\label{LLGPARTcomparisonlemma}
Let $f:\R\times\R\to\R$ be $C^1$ and nondecreasing in its second variable. Assume further that $y:I\subset\R\to\R$ is a continuous function satisfying $y(t)\leq y_0 + \int_0^t f(s,y(s))~ds$ for all $t>0$. Let $z:I\to\R$ be the solution of $z'(t)=f(t,z(t))$, $z(0)=y_0$. Then, it holds $y(t)\leq z(t)$ for all $t>0$.
\end{lemma}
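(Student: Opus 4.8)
This is the classical differential--inequality comparison principle; the plan is to convert the integral inequality into a differential inequality for a suitable majorant, and then compare that subsolution with $z$ via a small perturbation. First I would set $Y(t):=y_0+\int_0^t f(s,y(s))\,ds$. Since $y$ is continuous and $f$ is $C^1$, the map $s\mapsto f(s,y(s))$ is continuous, so $Y\in C^1(I)$ with $Y'(t)=f(t,y(t))$ and $Y(0)=y_0$; moreover $y(t)\le Y(t)$ on $I$ by hypothesis. Using that $f$ is nondecreasing in its second argument together with $y(t)\le Y(t)$, one obtains $Y'(t)=f(t,y(t))\le f(t,Y(t))$, i.e.\ $Y$ is a $C^1$ subsolution of the ODE that defines $z$. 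It therefore suffices to prove $Y(t)\le z(t)$ on $I$, since then $y(t)\le Y(t)\le z(t)$.

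To compare the subsolution $Y$ with the solution $z$, I would introduce, for $\eps>0$, the solution $z_\eps$ of $z_\eps'(t)=f(t,z_\eps(t))+\eps$, $z_\eps(0)=y_0$, which exists and is locally unique because $f$ is locally Lipschitz. The key claim is that $Y(t)<z_\eps(t)$ for all $t>0$ in the common interval of existence. At $t=0$ one has $Y(0)=z_\eps(0)=y_0$ and $(z_\eps-Y)'(0)=\eps>0$, so $Y<z_\eps$ on a right neighbourhood of $0$. If the claim failed, there would be a first touching point $t_1>0$ with $Y(t_1)=z_\eps(t_1)$ and $Y<z_\eps$ on $(0,t_1)$; then on the one hand $(z_\eps-Y)'(t_1)\le0$ (since $z_\eps-Y$ is positive to the left of $t_1$ and vanishes there), while on the other hand $y(t_1)\le Y(t_1)=z_\eps(t_1)$ and monotonicity of $f$ give $Y'(t_1)=f(t_1,y(t_1))\le f(t_1,z_\eps(t_1))<f(t_1,z_\eps(t_1))+\eps=z_\eps'(t_1)$, i.e.\ $(z_\eps-Y)'(t_1)>0$ --- a contradiction. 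Hence $Y<z_\eps$ as claimed.

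Finally I would let $\eps\to0$. By continuous dependence of ODE solutions on the parameter (valid since $f$ is $C^1$), on every compact subinterval $[0,T']\subset I$ the solutions $z_\eps$ exist for $\eps$ sufficiently small and converge uniformly to $z$ on $[0,T']$; passing to the limit in $Y(t)<z_\eps(t)$ yields $Y(t)\le z(t)$ on $[0,T']$, and since $T'$ was arbitrary, on all of $I$. Combined with $y\le Y$ this gives $y(t)\le z(t)$ for all $t>0$, as asserted. The two delicate points are exactly (i) the degeneracy at $t=0$, where $Y$ and $z$ start from the same value with equal one-sided derivatives --- this is precisely why the $\eps$-perturbation is needed to get a strict comparison --- and (ii) ensuring $z_\eps$ does not escape before $z$ does, which again follows from continuous dependence because $z$ remains bounded on each compact $[0,T']\subset I$.
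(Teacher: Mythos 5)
The paper does not prove this lemma: it is stated with the explicit remark that it is taken ``without a proof'' from Carbou and Fabrie (their Lemma~2.4). So there is no in-paper argument to compare against; I can only assess your proof on its own terms.

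Your proof is correct and is the standard $\varepsilon$-perturbation comparison argument. The decomposition is exactly right: (i) replace the integral inequality by the $C^1$ majorant $Y(t)=y_0+\int_0^t f(s,y(s))\,ds$, (ii) observe that monotonicity of $f$ in the second argument makes $Y$ a subsolution, (iii) compare $Y$ with the strictly perturbed supersolution $z_\varepsilon$ by a first-touching-point argument, and (iv) send $\varepsilon\to 0$ via continuous dependence. You also correctly identify the two places where care is needed — the degeneracy at $t=0$, which is precisely what the $\varepsilon$-perturbation resolves, and the fact that $z_\varepsilon$ exists on the same compact interval as $z$ for small $\varepsilon$. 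One cosmetic imprecision: you write $(z_\varepsilon-Y)'(0)=\varepsilon$, but in fact $(z_\varepsilon-Y)'(0)=f(0,y_0)+\varepsilon-f(0,y(0))\ge\varepsilon$, since the hypothesis only gives $y(0)\le y_0$; the inequality you need still holds, so nothing breaks, but the identity as written presumes $y(0)=y_0$. Everything else — the sign of the one-sided derivative of $z_\varepsilon-Y$ at a first touching point, the use of $y(t_1)\le Y(t_1)=z_\varepsilon(t_1)$ together with monotonicity to force $Y'(t_1)<z_\varepsilon'(t_1)$, and the uniform-on-compacts limit $z_\varepsilon\to z$ — is sound.
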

From \eqref{LLGPARTMLemmaEstimatetotal} and Lemma~\ref{LLGPARTcomparisonlemma} we deduce the existence of a time $0<t_1\leq t^{\ast\ast}$ such that
\begin{align}\label{LLGPARTMboundtotal}
& \|M_k\|_{L^\infty(0,t_1;W^{1,2}(\Omega;\R^3))} + \|M_k\|_{L^2(0,t_1;W^{3,2}(\Omega;\Hext))} \leq C(L,m,M_0,\Hext).
\end{align}

In order to be able to pass to the limit as $k \to \infty$ in  \eqref{Mapprox-eq}, we need to derive further estimates on the time derivative of $M_k$ as well as of $\nabla M_k$. To this end, let us test \eqref{Mapprox-eq} by $\partial_t M_k$ to get
\begin{align*}
&\int_\Omega |\partial_t M_k|^2~dx
=  \int_\Omega  \big(- (v\cdot\nabla)M_k - (M_k\times\Delta M_k) + |\nabla M_k|^2 M_k  + \Delta M_k - (M_k\cdot\Hext) M_k + \Hext\big)\cdot\partial_t M_k~dx \\
 &\qquad \leq 3 \int_\Omega |(v\cdot\nabla)M_k |^2 + |M_k\times\Delta M_k|^2 + |\nabla M_k|^4 |M_k|^2 + |\Delta M_k|^2   + (M_k\cdot\Hext)^2 |M_k|^2 + |\Hext|^2~dx \\&\qquad \qquad + \frac{1}{2} \int_\Omega |\partial_t M_k|^2~dx.
\end{align*}
From there, we get
\begin{align*}
\|\partial_t M_k\|_{L^2(\Omega;\R^3)}
\leq & 6 \Big( C(L,m) \|\nabla M_k\|_{L^2(\Omega;\R^3)}^2 + \|M_k\|_{L^\infty(\Omega;\R^3)} \|\Delta M_k\|_{L^2(\Omega;\R^3)}^2 + \|\nabla M_k\|_{L^4(\Omega;\R^{3 \times 2})}^4 \|M_k\|_{L^\infty(\Omega;\R^3)}^2 \\
&~~~ + \|\Delta M_k\|_{L^2(\Omega;\R^3)}^2 + \|\Hext\|_{L^2(\Omega;\R^3)}^2 \|M_k\|_{L^\infty(\Omega; \R^3)}^4 + \|\Hext\|_{L^2(\Omega; \R^3)}^2 \Big),
\end{align*}
where we take the supremum over all $t\in[0,t_1)$ to find, using \eqref{LLGPARTMboundtotal} and the fact that $\Hext\in C(0,T;L^2(\Omega; \R^3))$,
\begin{equation}\label{LLGPARTMtbound1}
\|\partial_t M_k\|_{L^\infty(0,t_1;L^2(\Omega; \R^3))} \leq C(L,m,M_0,\Hext).
\end{equation}
Next, we test \eqref{Mapprox-eq} by $ -\partial_t \Delta  M_k$ and integrate over $(0,t)$ for $t\leq t_1$ to find out that
\begin{align*}
& \int_0^t \| \partial_t \nabla M_k\|_{L^2(\Omega; \R^{3
 \times 2})}^2~ds + \frac12 \big(\|\Delta M_k (t)\|_{L^2(\Omega; \R^3)}^2- \|\Delta M_k (0)\|_{L^2(\Omega; \R^
 )}^2\big)\\
& \quad = \int_0^t \!\!\!\int_\Omega \big( (v\cdot\nabla)M_k + (M_k\times\Delta M_k) - |\nabla M_k|^2 M_k - (M_k\cdot\Hext) M_k + \Hext\big)\cdot\partial_t \Delta  M_k~dx~ds \\
& \quad = \int_0^t \!\!\!\int_\Omega  - \big((\nabla v (\nabla M_k)^\top)  - (v\cdot\nabla)\nabla M_k  - \nabla ( M_k\times\Delta M_k) + (2 (\nabla^2 M_k \nabla M_k ) \otimes   M_k  + |\nabla M_k|^2 \nabla M_k \\&\qquad \quad + (M_k\cdot\Hext)\nabla M_k - \nabla\Hext \big) \cdot \partial_t \nabla M_k   + ((\partial_t \nabla M_k)^\top M_k)\cdot ((\nabla M_k)^\top\Hext + (\nabla \Hext)^\top M_k) ~dx~ds \\
 & \qquad \leq \frac{1}{2} \int_0^t \|\partial_t \nabla M_k\|_{L^2(\Omega;\R^{3 \times 2})}^2  + 5 \int_0^t \!\!\!\int_\Omega C(L,m) \big(|\nabla M_k|^2 + |\nabla^2 M_k|^2\big) +|\nabla M_k|^2|\Delta M_k|^2 + |M_k|^2|\nabla \Delta M_k|^2
&~~~~~~~~~ + |(\nabla M_k\times\Delta M_k)|^2 + |M_k\times\nabla\Delta M_k|^2 \\
&~~~~~~~~~ + 4 |M_k|^2 |\nabla M_k|^2 |\nabla^2 M_k|^2 + |\nabla M_k|^6    + 2|M_k|^2 |\nabla M_k|^2|\Hext|^2  + |\nabla\Hext|^2 + |M_k|^4|\nabla \Hext|^2~dx~ds \\
 & \qquad \leq \int_0^t \|\partial_t \nabla M_k\|_{L^2(\Omega;\R^{3 \times 2})}^2 + 5 \Big(C(L,m) \big(\|\nabla M_k\|_{L^2(0,t_1;L^2(\Omega;\R^{3 \times 2}))}^2 + \|\nabla^2 M_k\|_{L^2(0,t_1;L^2(\Omega; \R^{3 \times 2 \times 2}))}^2\big) \\
&~~~~~~~~~ + (1+4\|M_k\|_{L^\infty(0,t_1 t;L^\infty(\Omega, \R^3))}^2)\|\nabla M_k\|_{L^\infty(0,t_1;L^4(\Omega; \R^{3 \times 2}))} \|\nabla^2 M_k\|_{L^2(0,t_1;L^4(\Omega;\R^{3 \times 2 \times 2}))} \\
&~~~~~~~~~ + \|M_k\|_{L^\infty(0,t_1;L^\infty(\Omega;\R^3))}^2 \|\nabla\Delta M_k\|_{L^2(0,t_1;L^2(\Omega;\R^{3 \times 2}))}^2 
 + \|\nabla M_k\|_{L^6(0,t_1;L^6(\Omega; \R^{3 \times 2}))}^6 + \|\nabla\Hext\|_{L^2(0,t_1;L^2(\Omega; \R^{3 \times 2}))}^2 \\
&~~~~~~~~~ +  2\|M_k\|_{L^\infty(0,t_1;L^\infty(\Omega; \R^3))}^2 2\|\Hext\|_{L^\infty(0,t_1;L^2(\Omega; \R^3))}^2 \|\nabla M_k\|_{L^2(0,t_1;L^\infty(\Omega; \R^{3 \times 2}))}^2 \\
&~~~~~~~~~ + \|M_k\|_{L^\infty(0,t_1;L^\infty(\Omega; \R^3))}^4\|\nabla\Hext\|_{L^2(0,t_1;L^2(\Omega; \R^{3 \times 2}))}^2 \Big).
\end{align*}
Taking the supremum over all $t\in[0,t_1)$ and using \eqref{LLGPARTMboundtotal}, we get the bound
\begin{equation}\label{LLGPARTMtbound2}
\|\partial_t \nabla M_k\|_{L^2(0,t_1;L^2(\Omega))} \leq C(v,M_0,\Hext).
\end{equation}

We now pass to the limit as $k\to\infty$ to obtain a weak solution to equation \eqref{Meq-Iterations1}. By our a-priori estimates , we can find $M \in L^\infty(0,t_1,W^{2,2}(\Omega; \R^3)\cap W^{1,\infty}(0,t_1; L^2(\Omega; \R^3) \cap L^2(0,t_1; W^{3,2}(\Omega; \R^3)$ such that for a (non-relabeled) subsequence of $(M_k)_{k \in \N}$, we have that
\begin{eqnarray}
& M_k \to M &\quad\text{in } L^p(0,t_1; W^{1,2}(\Omega;\R^{3})), \quad 1<p<\infty, \label{LLGPARTconvMnSTRONG} \\
& \partial_t M_k \rightharpoonup   \partial_t M &\quad\text{in } L^2(0,t_1;W^{1,2}(\Omega;\R^{3})). \label{LLGPARTconvddtMn}
\end{eqnarray}
Indeed, the weak convergence result follow by the Banach-Alaoglu theorem; while the strong convergence \eqref{LLGPARTconvMnSTRONG} is obtained from the Aubin-Lions lemma. In fact, the Aubin-Lions lemma yields at first the strong convergence $M_k \to M$ in $L^2(0,t_1;W^{2,2}(\Omega;\R^3))$  but combining this with the boundedness of $(M_k)_{k \in \N}$ $L^\infty(0,t_1;W^{2,2}(\Omega;\R^3))$ gives \eqref{LLGPARTconvMnSTRONG}.

Thus, multiplying \eqref{Mapprox-eq} with $\zeta \in L^2(0,t_1)$, integrating over $(0,t_1)$, and passing to the limit $k \to \infty$ yields the equation
\begin{align*}
& \int_0^{t_1}\!\!\!\int_\Omega \big(\partial_t M  + (v\cdot\nabla)M + (M \times \Delta M) - |\nabla M |^2 M - \Delta M + (M\cdot\Hext)M + \Hext\big) \cdot \varphi\zeta~dx~dt
\end{align*}
which holds for all $\varphi \in L^2(\Omega; \R^3)$ and all $\zeta \in L^2(0,t_1)$. From this, we can conclude that $M$ satisfies \eqref{Meq-Iterations1}. 

Furthermore, notice that $M\in L^\infty(0,t_1,W^{2,2}(\Omega; \R^3))\cap W^{1,\infty}(0,t_1; L^2(\Omega; \R^3)) \cap L^2(0,t_1; W^{3,2}(\Omega; \R^3))$ is the unique solution of \eqref{Meq-Iterations1}. Indeed,  assume that there existed two solutions $M_1, M_2 \in L^\infty(0,t_1,W^{2,2}(\Omega; \R^3))\cap W^{1,\infty}(0,t_1; L^2(\Omega; \R^3)) \cap L^2(0,t_1; W^{3,2}(\Omega; \R^3))$ with $M_1 \neq M_2$. The difference $M_1-M_2$ would then fulfill for almost all $x \in \Omega$ and almost all $t \in [0,t_1)$
\begin{align*}
&\partial_t M_1 - M_2 + (v\cdot\nabla)(M_1 - M_2)
= \Delta(M_1 - M_2) -(M_1 - M_2)\times\Delta M_1 + M_2\times(\Delta(M_1 - M_2))  \\ &\quad + \left(|\nabla M_1|^2 - |\nabla M_2|^2\right)M_1 + |\nabla M_2|^2(M_1 - M_2)  - (M_1 - M_2)(M_1\cdot\Hext) - M_2 ((M_1 - M_2)\cdot\Hext).
\end{align*}
We multiply this equation by $(M_1-M_2)$, integrate over $\Omega$ and use the identity $(a\times b)\cdot c = (b\times c)\cdot a$ to find out that
\begin{align*}
& \frac12 \frac{d}{dt} \int_\Omega|M_1 - M_2|^2 dx  + \int_\Omega |\nabla(M_1 - M_2)|^2 dx \\
 &\quad = \int_\Omega \big( (\Delta(M_1 - M_2))\times (M_1 - M_2) \big) \cdot M_2  + \left( \left(\nabla M_1 - \nabla M_2\right) \cdot \left(\nabla M_1 + \nabla M_2\right) \right)M_1 \cdot (M_1 - M_2) \\
&\quad \quad  + |\nabla M_2|^2 |M_1 - M_2|^2 - |M_1 - M_2|^2 (M_1\cdot\Hext) - M_2\cdot(M_1 - M_2)((M_1 - M_2)\cdot\Hext)~dx\\
 &\quad = \frac{1}{2}\int_\Omega |\nabla(M_1 - M_2)|^2 dx + \underbrace{\big(|M_2|^2+|\nabla (M_1+M_2)|^2|M_2|^2 + |\nabla M_2|^2 +(|M_1|+|M_2|)|\Hext| \big)}_{(\star)}|M_1 - M_2|^2 dx,
\end{align*}
where we integrated by parts in the first term on the second line. Now, due to the assumed regularity of $M_1$ and $M_2$, we know that $(\star)$ is bounded in $L^1(0,t_1; L^\infty(\Omega))$ which allow us to apply the Gronwall lemma. Thus, since $M_1(0) = M_2(0)$, the two solutions coincide.

Moreover, let us show that $M\in L^\infty(0,t_1,W^{2,2}(\Omega; \R^3))\cap W^{1,\infty}(0,t_1; L^2(\Omega; \R^3)) \cap L^2(0,t_1; W^{3,2}(\Omega; \R^3))$, the solution \eqref{Meq-Iterations1}, fulfills that $|M(t)|=1$ a.e. in $\Omega$ for a.a. $t\in [0,t_1)$.
To this end, let us multiply \eqref{Meq-Iterations1} with $M$ to obtain
\begin{align}
 \frac{1}{2}\Big(\partial_t |M|^2 + (v\cdot\nabla) |M|^2 - \Delta |M|^2 \Big)= (|M|^2-1)( |\nabla M|^2-M\cdot H_\mathrm{ext}) \qquad \text{a.e. in $\Omega\times [0,t_1)$} \label{EqMSq}
\end{align} 
Notice that \eqref{EqMSq} is solved by $|M|=1$ so we just need to show that this is the unique solution. Let us set $\theta:=|M|^2$ and since $M$ is fixed being the unique solution of \eqref{Meq-Iterations1}, we may denote $f(M):=|\nabla M|^2-M\cdot H_\mathrm{ext}$. Thus \eqref{EqMSq} transfers to an equation for $\theta$ that reads
\begin{equation}
 \frac{1}{2}\Big(\partial_t \theta + (v\cdot\nabla) \theta - \Delta \theta \Big)= (\theta-1)f(M) \qquad \text{a.e. in $\Omega\times [0,t_1)$ with $\theta(0)=|M_0|^2=1$;} 
 \label{EqMSq2}
\end{equation}
now if \eqref{EqMSq2} had two solutions $\theta_1, \theta_2 \in L^\infty(0,t_1,W^{2,2}(\Omega))\cap W^{1,\infty}(0,t_1; L^2(\Omega))$ we could subtract \eqref{EqMSq} for $\theta_1$ and $\theta_2$, multiply by $\theta_1-\theta_2$ and conclude by the Gronwall lemma that the two solutions have to coincide. 

Finally, we pass to the limit in the inequality in \eqref{LLGPARTMLemmaEstimateH2Extensionpreconv} integrated over $(0,t_1)$. On the left-hand side we rely on the convexity of the norm, while on the right-hand side it is enough to use the strong convergence \eqref{LLGPARTconvMnSTRONG}. Therefore, since $|M|=1$ a.e. in $\Omega \times [0,t_1)$ in the limit, we obtain for almost all $t\in [0,t_1)$
\begin{align*}
& \nonumber \|\Delta M(t)\|_{L^2(\Omega;\R^3)}^2  \leq \|\Delta M_0\|_{L^2(\Omega;\R^3)}^2 \\&\qquad + C(L,m,\Hext) \int_0^{t}1+\|\nabla M_k\|_{L^2(\Omega;\R^{3\times 2})}^6 + \|\nabla M_k\|_{L^2(\Omega;\R^{3\times 2})}^2\|\Delta M_k\|_{L^2(\Omega;\R^3)}^4 ~ds.
\end{align*}
\end{proof}

\begin{proof}[Proof of Lemma \ref{lemma-cont}]
We show that $\mathcal{L}$ defined in \eqref{operatorL} is continuous on $V_m(t^\ast)$ in the topology of $C(0,t^\ast;\H_m)$. To this end, let $(v_l)_{l \in \N} \subset V_m(t^\ast)$ converge to some $v\in V_m(t^\ast)$ in the sense that $(g_m^i)_l \to g_m^i$ in $C(0,t^\ast)$ for $i=1,\ldots,m$, where $v_l = \sum_{i=1}^m (g^i_m)_l(t)\xi_i(x)$ and $v = \sum_{i=1}^m g^i_m(t)\xi_i(x)$.

Let us denote by $(F_l, M_l)$ and $(F,M)$ the solutions of \eqref{Feq-Iterations}-\eqref{Meq-Iterations} corresponding to $v_l$ and $v$, respectively.  Notice that their  existence is guaranteed by Lemma~\ref{LLGPARTLemParabolicEq}. 

Let us first realize that $F_l \to F$ in  $L^\infty(0,t^\ast;L^2(\Omega))$. To this end, subtract \eqref{Feq-Iterations} for $F$ from \eqref{Feq-Iterations} for $F_l$, test the result by $F_l-F$ and integrate of over $(0,t)$ with some $0\leq t\leq t^\ast $to obtain 
\begin{align*}
&\frac12\int_\Omega |F_l - F|^2(t)~dx + \int_0^t\!\!\!\int_\Omega \kappa |\nabla (F_l - F)|^2~dx~ds = - \frac12 \int_0^t\!\!\!\int_\Omega (v_l\cdot\nabla)|F_l - F|^2  dx ds  \\ &\qquad + \int_0^t\!\!\!\int_\Omega (\nabla v_l (F_l - F)) \cdot (F_l - F) - ((v_l-v)\cdot\nabla)F \cdot (F_l - F) + (\nabla v_l - \nabla v)F \cdot (F_l - F)~dx~ds,
\end{align*}
where we used that $F_l$ and $F$ have the same initial data. Realizing that $\int_0^t\!\!\!\int_\Omega \kappa |\nabla (F_l - F)|^2~dx~ds = 0$ because $v_l$ is divergence free and employing the Young's inequality yields that
\begin{align}\label{estimateNablaDeltaFDIFF}
 \nonumber \left\| (F_l - F)(t) \right\|_{L^2(\Omega;\R^{2 \times 2})}^2
& \leq C\int_0^t \|((v_l-v)\cdot\nabla)F\|_{L^2(\Omega;\R^{2 \times 2})}^2 + \|(\nabla v_l - \nabla v)F\|_{L^2(\Omega;\R^{2 \times 2})}^2 ds \\ &\qquad \quad + \int_0^t \|(F_l - F)(s)\|_{L^2(\Omega;\R^{2 \times 2})}^2~ds,
\end{align}
where the first integral on the right hand side vanishes as $l\to\infty$ due to the assumed convergence of $(v_l)_{l\in \N}$. The claim thus follows by the Gronwall inequality.

Next, we check that $M_l \to M$ in $L^2(0,t^\ast;W^{1,2}(\Omega;\R^3))$. Similarly as above, we subtract \eqref{Meq-Iterations} for $M$ from \eqref{Meq-Iterations} for $M_l$ to have that for a.a. $x \in \Omega$ and a.e. $t \in [0,t^\ast)$:
\begin{align}\label{LLGPARTContOpMeq}
\nonumber& \partial_t (M_l - M) - \Delta(M_l - M) + (v_l\cdot\nabla)(M_l - M) + ((v_l - v)\cdot\nabla)M =  -(M_l - M)\times(\Delta M_l + \Hext) \\ &\quad + M\times(\Delta(M_l - M)) + \left(|\nabla M_l|^2 - |\nabla M|^2\right)M_l + \big(|\nabla M|^2-(M_l{\cdot}\Hext)\big)(M_l - M) - M ((M_l - M){\cdot}\Hext);
\end{align}
further multiply the result by $M_l-M$ and integrate over $\Omega$ and $(0,t)$  with some $0\leq t\leq t^\ast$ to get
\begin{align*}
& \frac12 \int_\Omega |M_l - M|^2 (t)~dx + \int_0^t \!\!\! \int_\Omega |\nabla(M_l-M)|^2~dx~ds \\
 &\quad \leq \int_0^t \!\!\! \int_\Omega \big(|v_l - v||\nabla M|+| \nabla (M_l-M)|(2|\nabla M| + |\nabla M_l|)\big) |(M_l - M)|  + \big( |\nabla M|^2 + 2|\Hext|\big) |M_l - M|^2 ~dx~ds
 \end{align*}
 Using now the Young's inequality, we obtain
\begin{align}\label{LLGPARTestimateMDIFF}
\nonumber& \frac12 \int_\Omega |M_l - M|^2 (t)~dx + \int_0^t \!\!\! \int_\Omega |\nabla(M_l-M)|^2~dx~ds \leq \int_0^t\!\!\!\int_\Omega |v_l - v||\nabla M| dx ds \\
 & + C \int_0^t \Big( 1 + \|\nabla M\|_{L^\infty(\Omega;\R^{3\times 2})}^2 +  \|\nabla M_l\|_{L^\infty(\Omega;\R^{3\times 2})}^2 + \|\Hext\|_{L^\infty(\Omega;\R^3)} \Big) \|M_l - M\|_{L^2(\Omega;\R^3)}^2~ds,
\end{align}
from which the claim follows by the Gronwall inequality.

Let us define $(D_m^i)_l(t)$ and $D_m^i(t)$ via \eqref{LLGPARTbasrepvapproxmOp} by using $(F_l,M_l)$ and $(F,M)$, respectively. Notice that due to the already proved convergence of $(F_l)_{l \in \N}$ to $F$ and $(M_l)_{l\in \N}$ to $M$, we see that $(D_m^i)_l \to D_m^i$ in $L^{1}(0,t^\ast)$. 

Further take $\tilde{v}_l = \sum_{i=1}^m (\tilde{g}^i_m)_l(t)\xi_i(x)$ as $\mathcal{L}(v_l)$ and $\tilde{v} = \sum_{i=1}^m (\tilde{g}^i_m)(t)\xi_i(x)$ as $\mathcal{L}(v)$, i.e., the solutions of \eqref{LLGPARTbasrepMapproxmODE} with $(D_m^i)_l$ and $D_m^i$  as the right hand side, respectively. The proof is finished if we can show that $(\tilde{g}_m)_l \to \tilde{g}_m$ uniformly on $[0,t^\ast)$. To this end, subtract \eqref{LLGPARTbasrepMapproxmODE} for $\tilde{v}$ from the one for $\tilde{v}_l$ and write in matrix notation
\begin{align*}
& \partial_t \big( (\tilde g_m)_l(t) - \tilde g_m(t) \big) = -\nu\operatorname{diag}(\lambda_1,\ldots,\lambda_m) \big( (\tilde g_m)_l(t) - \tilde g_m(t) \big) + (D_m)_l(t) - D_m(t)\\ \qquad \quad  & + \big(A^1(\tilde g_m)_l(t)\cdot (\tilde g_m)_l(t),\ldots, A^m(\tilde g_m)_l(t)\cdot (\tilde g_m)_l(t)\big)  - \big(A^1\tilde g_m(t)\cdot \tilde g_m(t),\ldots,A^m\tilde g_m(t)\cdot \tilde g_m(t)\big) 
\end{align*}
Adding and subtracting the vector $\big(A^1\tilde g_m(t)\cdot (\tilde g_m)_l(t),\ldots,A^m\tilde g_m(t)\cdot \tilde g_m(t)\big) $ and integrating over $(0,t)$ with some $0 \leq t \leq t^ \ast$ gives
\begin{align*}
 \left| (\tilde g_m)_l(t) - \tilde g_m(t) \right| \leq 
 C(L,m) \int_0^t \left| (\tilde g_m)_l(s) - \tilde g_m(s) \right|~ds + \int_0^t \left|(D_m(s))_l - D_m(s)\right|~ds,
\end{align*}
and, by means of the Gronwall inequality, we obtain,
\begin{equation*}
\left| (\tilde g_m)_l(t) - \tilde g_m(t) \right| \leq \left(\int_0^t \big|(D_m)_l(s) - D_m(s)\big|~ds\right) \mathrm{e}^{C(L,m) t^\ast}.
\end{equation*}

\end{proof}

\textbf{Acknowledgements}\hspace{1em} We acknowledge the financial support by the DAAD with funds of the German Federal Ministry of Education and Research (BMBF) through grant ID-57134585.

\bibliographystyle{amsalpha}
\bibliography{references}

\end{document}